\documentclass{amsart}
\usepackage[utf8]{inputenc}
\usepackage{amsmath,amssymb}
\usepackage{xcolor}

\newcommand{\Z}{\mathbb{Z}}
\newcommand{\F}{K\langle X|G\rangle}
\newcommand{\fx}{f(x_1,\ldots,x_m)}

\newtheorem{thm}{Theorem}
\newtheorem{lemma}[thm]{Lemma}
\newtheorem{proposition}[thm]{Proposition}
\newtheorem{conjecture}{Conjecture}
\newtheorem{remark}[thm]{Remark}
\newtheorem{example}{Example}
\newtheorem{definition}{Definition}

\title[Images of graded polynomials on matrix algebras]{Images of graded polynomials on matrix algebras}

	\author[L. Centrone]{Lucio Centrone}
	\address{Dipartimento di Matematica, Universit\`a degli Studi di Bari, via Orabona 4, 70125, Bari, Italy}\email{lucio.centrone@uniba.it} \address{IMECC, Universidade Estadual de
		Campinas, Rua S\'ergio Buarque de Holanda, 651, Cidade Universit\'aria ``Zeferino Vaz'', Distr. Bar\~ao Geraldo, Campinas, S\~ao Paulo, Brazil, CEP
		13083-859}\email{centrone@unicamp.br}
	
	\author[T. C. de Mello]{Thiago Castilho de Mello}
	\address{Instituto de Ci\^encia e Tecnologia, Universidade Federal de S\~ao Paulo, Av. Cesare M. Giulio Lattes, 1201, 12247014,	S\~ao Jos\'e dos Campos, SP, Brazil}\email{tcmello@unifesp.br}\keywords{Images of polynomials on algebras, graded polynomial identities, graded structures}\subjclass[2010]{16R50, 16W50, 16R99, 16R10}

\begin{document}

\begin{abstract}
    The aim of this paper is to start the study of images of graded polynomials on full matrix algebras. We work with the matrix algebra $M_n(K)$ over a field $K$ endowed with its canonical $\mathbb{Z}_n$-grading (Vasilovsky's grading). We explicitly determine the possibilities for the linear span of the image of a  multilinear graded polynomial over the field $\mathbb Q$ of rational numbers and state an analogue of the  L'vov-Kaplansky conjecture about images of multilinear graded  polynomials on $n\times n$ matrices, where $n$ is a prime number. We confirm such conjecture for polynomials of degree 2 over $M_n(K)$ when $K$ is a quadratically closed field of characteristic zero or greater than $n$ and for polynomials of arbitrary degree over matrices of order 2. We also determine all the possible images of semi-homogeneous graded  polynomials evaluated on $M_2(K)$. 
\end{abstract}
\maketitle

\section{Introduction}

Let $K$ be a field and $K\langle X \rangle$ be the free associative algebra freely generated by the set $X=\{x_1, x_2, \dots, \}$ over $K$, i.e., $K\langle X  \rangle$ is the algebra of noncommutative polynomials in the variables of $X$ and with coefficients from $K$. If $A$ is a $K$-algebra, a polynomial $f=f(x_1, \dots, x_m)$ defines a map (also denoted by $f$):
	\[\begin{array}{cccc}
		f: & A^m & \longrightarrow & A \\
		& (a_1,\dots,a_m) & \longmapsto & f(a_1,\dots,a_m) \\
	\end{array}
	\]
The image of such map is called the \textit{image} of the polynomial $f$ on $A$.%

Recently, images of polynomials have been studied by several authors, mostly motivated by the famous open problem known as \emph{L'vov-Kaplansky Conjecture}. Such problem asks whether the image of a multilinear polynomial on the matrix algebra $M_n(K)$ is a vector subspace of $M_n(K)$. In this case, it must be one of the following subspaces: the full matrix algebra $M_n(K)$, the set of traceless matrices $sl_n(K)$, the set of scalar matrices that we identify with the ground field $K$, or the set $\{0\}$.

A solution to the L'vov-Kaplansky Conjecture is known only for some values of $m$ or $n$. For instance, the case $m=2$ is a consequence of a well-known result of Shoda \cite{Shoda} and Albert and Muckenhoupt \cite{AM} which states that any trace zero matrix is given by a commutator. Some partial results for $m=3$ were obtained in \cite{Dykema_Klep}. For $n=2$ a solution was given in \cite{K-BMR} for $K$ a quadratically closed field and in \cite{MalevM} for $K=\mathbb{R}$. The case $n=3$ has a partial solution too and we address the reader to the paper \cite{K-BMR3}.

It is easy to achieve that the analogue of the L'vov-Kaplansky Conjecture fails to be true if $A$ is not simple or $A$ is not finite dimensional. For, if $A$ is the infinite dimensional Grassmann algebra generated by $\{e_1, e_2, \dots\}$, the elements $e_1e_2$ and $e_3e_4$ lie in the image of the commutator $f(x_1,x_2)=[x_1,x_2]:=x_1x_2-x_2x_1$, but their sum does not.
Furthermore, in \cite{SantuloYukihide} the authors provide an example of a non-simple finite dimensional algebra whose images on a certain class of polynomials are not subspaces.

Some generalizations of the L'vov-Kaplansky Conjecture have been studied considering algebras other than $M_n(K)$. The possible images of a multilinear polynomial are known for the algebra of upper triangular matrices $UT_n(K)$ and for its subalgebra of strictly upper triangular matrices \cite{GargatedeMello, Wang_nxn, Fagundes} and also for the algebra of quaternions \cite{MalevQ} and for some classes of simple Jordan algebras \cite{MalevJ}.

The theory of polynomial identities in algebras (PI-theory) and the study of images of polynomials on algebras have a strong connection. Polynomials whose image is $\{0\}$ are the so called \textit{polynomial identities} of $A$ and those whose image is $K$, are the so called \textit{central polynomials} of $A$. Also, the solution of the case $n=2$ of the L'vov-Kaplansky Conjecture relies on the fact that the ideal of polynomial identities of $M_n(K)$ is a prime ideal in $K\langle X \rangle$.

An important tool in the study of polynomial identities are $G$-graded identities on $G$-graded algebras, where $G$ is a group. In the celebrated work of Kemer \cite{Kemer}, a crucial role was played by the $\Z_2$-graded identities in the solution of the Specht Problem. After the publication of Kemer's theory, a large number of papers on graded identities and graded central polynomials have been published, specially after the seminal papers \cite{div1} and \cite{Vasilovsky}.

In the light of the above facts, we consider a natural step to study images of graded polynomials on algebras.

Up to our knowledge, there is only one paper published toward images of graded polynomials on full matrix algebras: the one written by Kulyamin (see \cite{Kulyamin}). In that paper the author considers the algebra $A=M_n(K[G])$, where $K[G]$ is a finite group algebra of an abelian group $G$ over $K$ endowed with the natural $G$-grading on $A$ is induced by the grading on $K[G]$. The author proves that a homogeneous subset $S\subseteq A$ is the image of a graded polynomial with zero constant term if and only if $0\in S$ and $S$ is invariant under conjugation by degree zero elements of $A$. It is worth mentioning that during the preparation of this paper, the authors became aware of the preprint \cite{PlamenPedro} where the authors consider images of graded polynomials on upper triangular matrices.

In this paper, we study images of graded polynomials on $A=M_n(K)$ endowed with the canonical $\mathbb{Z}_n$-grading $A=\oplus_{g\in \mathbb{Z}_n}A_g$.
This case is completely different from Kulyamin's, since here the algebra $A$ is simple, while $M_n(K[G])$ is not simple, once $K[G]$ is not.
We believe our results can be generalized for more general types of gradings on $M_n(K)$.

The paper is organized as follows: first we present the basic definitions and results to study the problem. Later, we prove that the linear span of a  multilinear graded polynomial on a $\mathbb{Q}$-algebra $A$ is one of the following: $A_g$, for some $g\in \mathbb{Z}_n$, $\mathbb{Q}$ (viewed as the set of scalar matrices), $(sl_n)_0$, the set of trace zero diagonal matrices, or $\{0\}$. In light of this result we state a conjecture regarding the image of a  multilinear graded polynomial on $M_n(K)$ and we prove this conjecture for $n=2$ in the case $K$ is a quadratically closed field. The paper ends with a description of images of semi-homogeneous graded polynomials on $M_2(K)$.

\section{Preliminaries}
In this paper, all fields we refer to are assumed to be of characteristic zero and all algebras we consider are associative and unitary.
If $n$ is a positive integer and $1\leq i,j\leq n$, we denote by $E_{i,j}$ the matrix units, i.e., $E_{i,j}$ is the matrix whose entry $(i,j)$ is 1 and all other entries are $0$. If $k,l$ are not in the interval $[1,\dots,n]$, $E_{k,l}$ is defined by considering the representative of $k$ and $l$ modulo $n$ in $[1,\dots, n]$.

Let $G$ be any group and let $K$ be a field. When we consider an arbitrary group, we will use the multiplicative notation and we will denote the group unit by $1$. When considering an abelian group, we will use the additive notation and denote its unit by $0$. 

If $A$ is a $K$-algebra, we say $A$ is a \textit{$G$-graded algebra} if there are subspaces $A_g$, for each $g\in G$, such that \[A=\bigoplus_{g\in G} A_g \ \textrm{and for each } g,h\in G, \ A_gA_h\subseteq A_{gh}.\]
If $0\neq a\in A_g$, we say $a$ is \emph{homogeneous of $G$-degree g} and we write $\deg(a)=g$. We shall denote by $h(A)$ the set of homogeneous elements of the graded algebra $A$.

\begin{example}
\begin{enumerate}
    \item Any algebra may be endowed with a trivial $G$-grading, where $G$ is any group, if we set $A_{1}=A$ and for each $g\neq 1$ $A_g=0$. 
    \item If $A=K[G]$ is the group algebra generated by $G$ over the field $K$, $A$ is naturally $G$-graded if we set $A_g=K\cdot g$ for each $g\in G$. 
    \item If $A=M_n(K)$
    and $G$ is a group, let $\overline{g}=\{g_1,\ldots,g_n\}$ be an $n$-tuple of elements of $G$, then $A$ is $G$-graded 
    if we set $A_g$ to be the subspace generated by matrices $E_{ij}$ such that $g_i^{-1}g_j=g$. 
    This grading is called the \emph{elementary grading determined by $\overline g$}.
    
    \item If in the above example we set $G=\Z_n$, and we choose the $n$-tuple $\overline{g}$ to be $(\overline{0},\overline{1},\ldots,\overline{n-1})$, we will refer to this grading as the \textit{Vasilovsky grading} of $A$. Notice that the component $A_0$ is the set of diagonal matrices.
    We recall that such grading was introduced by Di Vincenzo in \cite{div1} for $2\times 2$ matrices, and its graded polynomial identities were described in \cite{div1} for $n=2$ and in the general case in \cite{Vasilovsky} (characteristic zero) and \cite{Azevedo} (positive characteristic). Moreover,  the graded central polynomials in this case were described in \cite{Brandao}.

\end{enumerate}
\end{example}

In order to work in the setting of graded algebras, we need to introduce the graded analogue of polynomials, the so called \textit{graded polynomials}.

Let $\{X_{g}\mid g \in G\}$ be a family of disjoint countable sets. Set $X=\bigcup_{g\in G}X_{g}$ and denote by $\F$ the free associative algebra freely generated by the set $X$ over $K$. To define a grading on $\F$ we first put $\deg(x)=g$, if $x\in X_{g}$ and we extend this map to monomials by setting \[\deg(x_{i_1}x_{i_2}\cdots x_{i_k})=\deg(x_{i_1})\cdot\deg(x_{i_2})\cdots\deg(x_{i_k}). \] We will say $x_{i_1}\cdots x_{i_k}$ has \textit{$G$-homogeneous degree $g$} (or $G$-degree $g$, or homogeneous degree $g$). 
For every $g \in G$ we denote by $\F_g$ the subspace of $\F$ spanned by all monomials having homogeneous $G$-degree $g$. Notice that $\F_g\F_{h}\subseteq \F_{gh}$ for all $g,h \in G$. Thus \[\F=\bigoplus_{g\in G}\F_g\] is a $G$-graded algebra. We refer to the elements of $\F$ as \textit{$G$-graded polynomials} or just \textit{graded polynomials}. An ideal $I$ of $\F$ is said to be a $T_{G}$-ideal if it is invariant under all $K$-endomorphisms $\varphi:\F\rightarrow\F$ such that $\varphi\left(\F_g\right)\subseteq\F_g$ for all $g\in G$. If $A$ is a $G$-graded algebra, a $G$-graded polynomial $\fx$ is said to be a \emph{graded polynomial identity} of $A$ if $f(a_1,a_2,\ldots,a_m)=0$ for all $a_1,a_2,\dots,a_m\in h(A)$ such that $a_k\in A_{\deg(x_k)}$, $k=1,\dots,m$. 
If $A$ satisfies a non-trivial graded polynomial identity, $A$ is said to be a \textit{$G$-graded PI-algebra}. We denote by $T_G(A)$ the ideal of all graded polynomial identities of $A$. It is a $T_G$-ideal of $\F$.

Let $A$ be a $G$-graded algebra. If $\fx\in \F$, let $g_i=\deg(x_i)\in G$. Then $f$ defines a map (also denoted by $f$):
	\[\begin{array}{cccc}
		f: & A_{g_1}\times \cdots \times A_{g_m} & \longrightarrow & A \\
		& (a_1,\dots,a_m) & \longmapsto & f(a_1,\dots,a_m) \\
	\end{array}
	\]
\begin{definition}
The image of such map is called the image of the graded polynomial $f$ on the graded algebra $A$.
\end{definition}

Below, we can find some examples.

\begin{example}
\begin{enumerate}
    \item If $A$ is a $G$-graded algebra, and $\fx\in \F$ is a graded polynomial, then $Im(f)=\{0\}$ if and only if $\fx$ is a graded polynomial identity of $A$.
    \item If $A=M_n(K)$ with the Vasilovsky grading, then the image of the polynomial $f(x_1,\dots,x_n)=\sum_{\sigma\in S_n}x_{\sigma(1)}\cdots x_{\sigma(n)}$ is $K$ (the set of scalar matrices) if $\deg(x_1)=\cdots = \deg(x_n)=\overline{1}$ (see \cite[Proposition 1]{Brandao}).
    \item If $A=UT_n(K)$ is the set of $n\times n$ upper triangular matrices, endowed with the Vasilovsky (induced) grading, then the image of $f(x_1,\dots,x_n)=x_1\cdots x_{n-1}$, with $\deg(x_1)=\cdots \deg(x_{n-1})=\overline{1}$, is the 1-dimensional subspace of $A$ spanned by ${E_{1n}}$.
\end{enumerate}
\end{example}

 We say a $G$-graded polynomial $p\in \F$ is \textit{multilinear} of degree $n$ if it is multilinear as a polynomial, that is, if it can be written as \[\sum_{\sigma\in S_n} \alpha_\sigma x_{\sigma(1)} \cdots x_{\sigma(n)}, \] for some $\alpha_\sigma\in K$. 

We also say a $G$-graded polynomial $p(x_1, \dots, x_m)$  is \textit{multihomogeneous} of degree $(n_1, \dots, n_m)$ if the variable $x_i$ appears exactly $m_i$ times in any of its monomials.

We call the reader's attention to the fact that there are two different gradings been considered in $\F$.
A $G$-grading, as defined above and the usual $\mathbb{Z}$-multigrading. In particular, a multilinear graded polynomial is a polynomial in $K\langle x_1, \dots, x_m | G\rangle $ which is multihomogeneous of degree  $(1, \dots, 1)$.
    
\section{The linear span of the image of a homogeneous graded polynomial}

In this section we study the linear span of the image of a  $\mathbb{Z}_n$-homogeneous graded polynomial on $M_n(K)$ endowed  with the canonical $\mathbb{Z}_n$-grading. 

The next is a straightforward adaptation of \cite[Lemma 5]{K-BMR}.

\begin{lemma}\label{units}
    Let $f(x_1,\dots, x_m)$ be a $\mathbb{Z}_n$-graded polynomial of homogeneous degree $g\neq 0$. Assume that $a_1, \dots, a_m$ are matrix units in $M_n(K)$. Then $f(a_1,\dots, a_m)$ is a scalar multiple of $E_{i,j}$, for some $i\neq j$ such that $\deg (E_{i,j})=g$.
\end{lemma}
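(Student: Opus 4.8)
The plan is to track what happens upon substituting matrix units, combining the telescoping behaviour of products of matrix units with the classical description of Eulerian trails in directed multigraphs. First I would fix notation: write each $a_k=E_{i_k,j_k}$, and note that since $a_k$ must lie in the homogeneous component $A_{\deg(x_k)}$ for the graded evaluation $f(a_1,\dots,a_m)$ to make sense, we have $j_k-i_k\equiv\deg(x_k)\pmod n$. Expand $f=\sum_\mu\alpha_\mu\,\mu$ as a linear combination of monomials; each $\mu$ has $G$-degree $g$. A single monomial $\mu=x_{k_1}\cdots x_{k_r}$ evaluates to $E_{i_{k_1},j_{k_1}}\cdots E_{i_{k_r},j_{k_r}}$, which is either $0$ or the single matrix unit $E_{i_{k_1},j_{k_r}}$, the latter exactly when $j_{k_t}=i_{k_{t+1}}$ for $1\le t<r$. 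When it is nonzero, telescoping the chain of equal indices gives
\[
j_{k_r}-i_{k_1}=\sum_{t=1}^{r}(j_{k_t}-i_{k_t})\equiv\sum_{t=1}^{r}\deg(x_{k_t})=\deg(\mu)=g\pmod n,
\]
so $\mu(a_1,\dots,a_m)=E_{p,q}$ with $q-p\equiv g$; in particular $p\neq q$ since $g\neq0$, and $\deg(E_{p,q})=g$. Thus each surviving monomial individually already lands on an off-diagonal matrix unit of degree $g$.

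The real content, and the step I expect to be the main obstacle, is showing that all the surviving monomials land on the \emph{same} matrix unit. For this I would attach to $(a_1,\dots,a_m)$ the directed multigraph $\Gamma$ on $\{1,\dots,n\}$ whose edges are $i_k\to j_k$, one for each copy of $x_k$ appearing in a monomial of $f$; this multigraph is independent of $\mu$ precisely because $f$ is multihomogeneous (in particular in the multilinear case, where every monomial is a permutation of $x_1\cdots x_m$). A monomial $\mu$ produces a nonzero value iff the corresponding ordering of the edges of $\Gamma$ is an Eulerian trail, and then $\mu(a_1,\dots,a_m)=E_{u,v}$, where $u,v$ are its start and end vertices. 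Invoking the classical characterisation: if $\Gamma$ has an Eulerian trail at all, then either that trail is closed — impossible here, since the value would be the diagonal unit $E_{u,u}$, contradicting $\deg=g\neq0$ (equivalently, a balanced digraph would force $\sum_k(j_k-i_k)=0$) — or there is a unique vertex $u$ with out-degree one more than in-degree and a unique vertex $v$ with in-degree one more than out-degree, and \emph{every} Eulerian trail runs from $u$ to $v$. Hence every surviving monomial evaluates to the one matrix unit $E_{u,v}$, and therefore
\[
f(a_1,\dots,a_m)=\Bigl(\sum_{\mu:\ \mu(a)\neq0}\alpha_\mu\Bigr)E_{u,v},
\]
a scalar multiple of $E_{u,v}$ with $u\neq v$ and $\deg(E_{u,v})=g$; and if $\Gamma$ admits no Eulerian trail, then every $\mu(a_1,\dots,a_m)=0$, so $f(a_1,\dots,a_m)=0$, the zero scalar multiple.

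In summary, the only delicate point beyond routine bookkeeping is the Eulerian-trail dichotomy together with the repeated use of the hypothesis $g\neq0$: it is what excludes the Eulerian-circuit case (so that all surviving monomials share a common source and sink) and what guarantees the resulting matrix unit is off-diagonal. I would also flag that multihomogeneity of $f$ is genuinely needed for $\Gamma$ to be well defined independently of the monomial — for a merely $\mathbb{Z}_n$-homogeneous polynomial whose monomials involve different multisets of variables the conclusion can fail — so the statement is to be read (and is applied) with $f$ multilinear, or at least multihomogeneous.
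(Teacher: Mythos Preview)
Your argument is correct and is exactly the approach the paper has in mind: the lemma is stated without proof as ``a straightforward adaptation of \cite[Lemma~5]{K-BMR}'', and that lemma is proved precisely by your Eulerian-trail argument --- one builds the directed multigraph on the row/column indices of the chosen matrix units and uses that, when an Eulerian trail exists and is not closed, its endpoints are uniquely determined by the degree imbalance, so every nonvanishing monomial evaluates to the same $E_{u,v}$.

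Your caveat at the end is also well taken and worth recording: the lemma as literally stated (for an arbitrary $\mathbb{Z}_n$-homogeneous graded polynomial) is false --- your own counterexample idea works, e.g.\ with $n=3$, $f=x_1x_2+x_3$ where $\deg(x_1)=0$, $\deg(x_2)=\deg(x_3)=1$, and $a_1=E_{1,1}$, $a_2=E_{1,2}$, $a_3=E_{2,3}$ one gets $E_{1,2}+E_{2,3}$. The multigraph $\Gamma$ is well defined only when every monomial uses the same multiset of variables, i.e.\ when $f$ is multihomogeneous; the K-BMR original is stated for multilinear $f$, and the paper only ever invokes the lemma in that setting, so reading the hypothesis as ``multilinear (or multihomogeneous)'' is the intended and correct interpretation.
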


Since $G$ is abelian, conjugation by a homogeneous element does not change the degree of a homogeneous element
We have the following result. 

\begin{lemma}\label{cone}
    Let $f(x_1,\dots, x_m)$ be a graded polynomial of homogeneous $\mathbb{Z}_n$-degree $g$. Then $Im (f)$ is a subset of $M_n(K)$ which is invariant under conjugation by homogeneous matrices. In particular, it is invariant under conjugation by $N=\sum_{i=1}^n E_{i,i+1}$ and $D=\sum_{i=1}^n d_{i}E_{i,i}$, for any $d_1, \dots d_n\in K\setminus \{0\}$. 
\end{lemma}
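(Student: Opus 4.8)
The plan is to verify both assertions directly from the definition of a graded evaluation. Let $f$ be homogeneous of $\mathbb{Z}_n$-degree $g$, and let $P\in M_n(K)$ be any invertible homogeneous matrix, say $\deg(P)=h$. The key point is that conjugation $a\mapsto PaP^{-1}$ is a graded automorphism of $M_n(K)$: since $\mathbb{Z}_n$ is abelian, if $a\in A_{g_i}$ then $PaP^{-1}\in A_{h}A_{g_i}A_{-h}\subseteq A_{h+g_i-h}=A_{g_i}$, so conjugation preserves each homogeneous component. First I would fix $b=f(a_1,\dots,a_m)\in Im(f)$ with $a_k\in A_{\deg(x_k)}$, and compute
\[
P b P^{-1} = P f(a_1,\dots,a_m) P^{-1} = f(Pa_1P^{-1},\dots,Pa_mP^{-1}),
\]
the last equality because $f$ is a polynomial and conjugation is an algebra homomorphism. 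Since each $Pa_kP^{-1}$ again lies in $A_{\deg(x_k)}$ by the previous observation, the tuple $(Pa_1P^{-1},\dots,Pa_mP^{-1})$ is an admissible argument for the graded map $f$, hence $PbP^{-1}\in Im(f)$. This proves $Im(f)$ is invariant under conjugation by every invertible homogeneous matrix.

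For the ``in particular'' part, it remains only to observe that the specific matrices $N=\sum_{i=1}^n E_{i,i+1}$ and $D=\sum_{i=1}^n d_i E_{i,i}$ (with all $d_i\neq 0$) are invertible and homogeneous in the Vasilovsky $\mathbb{Z}_n$-grading: each $E_{i,i+1}$ has degree $\overline{1}$, so $N\in A_{\overline 1}$ and $N$ is a permutation-type matrix with $N^n=I$, hence invertible; each $E_{i,i}$ has degree $\overline 0$, so $D\in A_{\overline 0}$, and $D$ is diagonal with nonzero entries, hence invertible with inverse $\sum_i d_i^{-1}E_{i,i}$. Applying the general statement to $P=N$ and $P=D$ gives the claim.

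There is essentially no serious obstacle here; the only thing to be careful about is the bookkeeping that conjugation genuinely lands back in the correct graded component, which is exactly where abelianness of $\mathbb{Z}_n$ is used (for a nonabelian $G$ an inner automorphism by a homogeneous element need not be graded, and the statement would fail). One should also note in passing that the argument does not require $f$ to be multilinear or even multihomogeneous beyond $G$-homogeneity: it works verbatim for any $f\in K\langle X|G\rangle_g$, which is why the lemma is stated at this level of generality and will be reused when passing to semi-homogeneous polynomials later.
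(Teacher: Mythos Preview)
Your proof is correct and follows exactly the reasoning the paper intends: the paper does not give a formal proof of this lemma, but immediately before stating it remarks that ``since $G$ is abelian, conjugation by a homogeneous element does not change the degree of a homogeneous element,'' which is precisely the key observation you spell out. Your write-up simply makes explicit what the paper leaves implicit, and your side comments about the role of abelianness and about not needing multilinearity are both accurate and consistent with how the lemma is later used.
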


\begin{lemma}
    Let $f(x_1,\dots, x_m)$ be a graded polynomial of homogeneous $\mathbb{Z}_n$-degree $g\neq 0$. If $f$ is not a graded polynomial identity of $A=M_n(K)$, then the linear span of $Im (f)$ equals the homogeneous component $A_g$.
\end{lemma}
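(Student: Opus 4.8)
The plan is to combine the inclusion $Im(f)\subseteq A_g$ with the conjugation invariance supplied by Lemma~\ref{cone}, and to read off that the only invariant subspaces of $A_g$ available are $\{0\}$ and $A_g$ itself. Since $f$ is $\mathbb{Z}_n$-homogeneous of degree $g$, every monomial of $f$ evaluated on arguments of the prescribed degrees lands in $A_g$, so $W:=\mathrm{span}(Im(f))$ is a subspace of $A_g$. Recall that, under the Vasilovsky grading with $g\neq 0$, the component $A_g$ is $n$-dimensional with basis the matrix units $E_{i,i+g}$, $i\in\mathbb{Z}_n$. By Lemma~\ref{cone}, $W$ is invariant under conjugation by the invertible element $N=\sum_{i=1}^{n}E_{i,i+1}\in A_{\overline 1}$ and by every invertible diagonal matrix $D=\sum_i d_iE_{i,i}$. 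It suffices to show that these invariances already force $W\in\{\{0\},A_g\}$; then, since $f$ is not a graded identity, $Im(f)\neq\{0\}$, hence $W\neq\{0\}$ and so $W=A_g$.

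First I would use invariance under the full diagonal torus to show that $W$ is a coordinate subspace of $A_g$, i.e.\ $W=\mathrm{span}\{E_{i,i+g}\mid i\in S\}$ for some $S\subseteq\mathbb{Z}_n$. Indeed $DE_{i,i+g}D^{-1}=(d_i/d_{i+g})E_{i,i+g}$, so the torus acts on $A_g$ diagonally in the basis $\{E_{i,i+g}\}$, with characters $\chi_i(D)=d_i/d_{i+g}$. These $n$ characters are pairwise distinct (this is exactly where $g\neq 0$ is used: evaluate on a diagonal matrix having a single entry different from $1$). Since distinct characters of $(K\setminus\{0\})^n$ into $K$ are linearly independent and $K$ is infinite, for any $w=\sum_i c_iE_{i,i+g}\in W$ one can pick finitely many $D_1,\dots,D_k$ making the matrix $[\chi_i(D_s)]$ of full rank, and then solve a linear system so that suitable $K$-linear combinations of the conjugates $D_swD_s^{-1}\in W$ isolate each summand $c_iE_{i,i+g}$. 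Hence every matrix unit occurring in some element of $W$ lies in $W$, which is the claim.

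Next I would bring in conjugation by $N$. A direct computation gives $NE_{i,j}N^{-1}=E_{i-1,j-1}$ with indices modulo $n$, so conjugation by $N$ permutes the basis $\{E_{i,i+g}\mid i\in\mathbb{Z}_n\}$ by the single $n$-cycle $i\mapsto i-1$. As $W$ is $N$-invariant and, by the previous step, equals the span of the matrix units $\{E_{i,i+g}\mid i\in S\}$ it contains, the set $S$ must be stable under this $n$-cycle, whence $S=\varnothing$ or $S=\mathbb{Z}_n$, i.e.\ $W=\{0\}$ or $W=A_g$. Together with $Im(f)\neq\{0\}$ this gives $W=A_g$.

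I do not anticipate a serious obstacle. The only mildly delicate point is the reduction to coordinate subspaces via the torus action, where one must verify that the $n$ characters $\chi_i$ are pairwise distinct; this is precisely the place where the hypothesis $g\neq 0$ enters, and its failure for $g=0$ (all $\chi_i$ trivial) is what makes the diagonal component genuinely different and produces the extra possibilities $\mathbb{Q}$ and $(sl_n)_0$ mentioned in the introduction. Note also that no multihomogeneity reduction is needed, since Lemma~\ref{cone} applies verbatim to an arbitrary $\mathbb{Z}_n$-homogeneous polynomial.
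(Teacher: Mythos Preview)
Your argument is correct, but it follows a genuinely different route from the paper's. The paper proceeds directly: Lemma~\ref{units} guarantees that some evaluation of $f$ on matrix units already equals a nonzero scalar multiple of a single $E_{i,i+g}$, so that matrix unit lies in $Im(f)$ itself; then conjugation by powers of $N$ cycles this to every $E_{i+k,i+k+g}$, and all basis vectors of $A_g$ are exhibited inside $Im(f)$. By contrast, you never invoke Lemma~\ref{units}; instead you classify the torus- and $N$-invariant subspaces of $A_g$ abstractly, using that the characters $\chi_i(D)=d_i/d_{i+g}$ are pairwise distinct to force $W$ to be a coordinate subspace, and then the cyclic $N$-action to force $S=\varnothing$ or $S=\mathbb{Z}_n$.

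Each approach has its merits. The paper's is shorter and actually proves the stronger statement that every $E_{i,i+g}$ lies in $Im(f)$, not just in its span; this is useful later (e.g.\ in the $M_2$ theorem). Your approach is more structural: it explains \emph{why} $A_g$ has no nontrivial proper subspaces invariant under both actions, and it makes transparent exactly where $g\neq 0$ enters (pairwise distinctness of the $\chi_i$) and why the $g=0$ case behaves differently. One small observation: the paper assumes characteristic zero throughout, so the ``$K$ infinite'' needed for your character-separation step is available.
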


\begin{proof}
    By Lemma \ref{units}, there exists an evaluation of $f$ equal to $c\cdot E_{i,j}$, for some $i\neq j$, $c\neq 0$, with $\deg (E_{i,j})=g$. Hence $E_{i,j} \in Im(f)$. By Lemma \ref{cone}, $Im (f)$ is invariant under conjugation by $N$. Then $N^{-1}E_{i,j}N=E_{i+1,j+1} \in Im(f)$. By applying the same argument $n$ times, we obtain $E_{i+k,j+k} \in Im(f)$, for any $k$ and the proof is complete.   
\end{proof}

The following is a consequence of Lemma \ref{cone}.

\begin{lemma}\label{conjugating}
    Let Let $A=M_n(K)$ be endowed with the canonical $\mathbb{Z}_n$-grading and let $M = \sum_{i=1}^n \gamma_i E_{i, i+g}\in A_g$, where $g$ is invertible in $\mathbb{Z}_n$. 
    \begin{enumerate}
        \item If $\gamma_i\neq 0$ for every $i\in\{1,\dots,n\}$, then there exists $D\in A_0$ such that all entries of  $DMD^{-1}$ but one are equal to 1.
        \item If $\gamma_i=0$, for some $i\in \{1, \dots, n\}$, there exists $D\in A_0$ such that all entries of $DMD^{-1}$ are 1 or 0.
    \end{enumerate}
\end{lemma}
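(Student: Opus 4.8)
The plan is to work with $M = \sum_{i=1}^n \gamma_i E_{i,i+g}$ directly and choose a diagonal conjugator $D = \sum_{i=1}^n d_i E_{i,i}$ (with all $d_i$ nonzero, so $D \in A_0$ is invertible). A direct computation gives $D M D^{-1} = \sum_{i=1}^n \gamma_i \, d_i \, d_{i+g}^{-1} \, E_{i,i+g}$, where indices are read modulo $n$. So the effect of conjugation is to replace the coefficient $\gamma_i$ by $\gamma_i \, d_i \, d_{i+g}^{-1}$, and the task reduces to a purely combinatorial/multiplicative question: can we choose nonzero scalars $d_1, \dots, d_n$ so that $\gamma_i d_i d_{i+g}^{-1} = 1$ for all but (at most) one index $i$?

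First I would observe that since $g$ is invertible in $\mathbb{Z}_n$, the map $i \mapsto i+g$ is an $n$-cycle on $\{1, \dots, n\}$, so after relabeling we may assume $g = 1$ (equivalently, follow the orbit $i_0, i_0+g, i_0+2g, \dots$). Then the equations $\gamma_i d_i d_{i+1}^{-1} = 1$ for $i = 1, \dots, n-1$ form a triangular system: set $d_1 = 1$, then solve successively $d_{2} = \gamma_1 d_1$, $d_3 = \gamma_2 d_2$, and in general $d_{i+1} = \gamma_i d_i$, i.e. $d_{i+1} = \gamma_1 \gamma_2 \cdots \gamma_i$. For part (1), when all $\gamma_i \neq 0$, these $d_i$ are all nonzero, so $D$ is a legitimate element of $A_0$; the remaining entry, in position $i = n$, becomes $\gamma_n d_n d_1^{-1} = \gamma_n \gamma_1 \cdots \gamma_{n-1} = \det$-like product, which is generally not $1$, giving exactly ``all entries but one equal to $1$.'' For part (2), if some $\gamma_j = 0$, I would start the recursion at that index: place the ``broken'' position at $j$ (the entry there is already $0$ and stays $0$ under any diagonal conjugation), set $d_{j+1} = 1$, and run the same triangular recursion around the cycle through the indices $j+1, j+2, \dots, j+n-1$; each step $d_{i+1} = \gamma_i d_i$ produces a nonzero $d_i$ precisely because all the $\gamma_i$ appearing are nonzero (we have peeled off the unique zero coefficient), so every remaining entry becomes $1$ and the $j$-th entry is $0$.

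The only subtlety — and the main thing to handle carefully rather than a genuine obstacle — is bookkeeping the indices modulo $n$ and making sure that in part (2) there is just one zero coefficient to worry about in the relevant cyclic block; if several $\gamma_i$ vanish, the same recursion still works and simply produces more $0$ entries, which is consistent with the statement ``all entries are $1$ or $0$.'' I would also note explicitly that conjugation by such $D$ preserves the homogeneous component $A_g$ (as already recorded before Lemma~\ref{cone}), so $DMD^{-1}$ is again of the form $\sum \delta_i E_{i,i+g}$, which is what makes the coefficient-by-coefficient analysis legitimate.
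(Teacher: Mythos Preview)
Your approach is essentially identical to the paper's: both compute $DMD^{-1}$ coefficient-by-coefficient, use the invertibility of $g$ to turn the problem into a single cyclic chain of equations, and then solve that chain by a triangular recursion $d_1=1$, $d_{k+1}=\gamma_k d_k$ (the paper does the reindexing by writing $M=\sum_k \gamma_k E_{kg,(k+1)g}$ rather than by relabeling to $g=1$, but it is the same device).

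One small point to tighten in part~(2): when several $\gamma_i$ vanish, running the single recursion $d_{i+1}=\gamma_i d_i$ literally will force some $d_{i+1}=0$, which is not allowed. The correct reading (and what the paper also leaves implicit) is that each zero coefficient breaks the cycle into arcs, and you restart the recursion with a fresh nonzero seed at the beginning of each arc; then all $d_i$ are nonzero, the entries on each arc become $1$, and the entries at the zero positions remain $0$. With that clarification your argument is complete.
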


\begin{proof} Since $g$ has a multiplicative inverse in $\mathbb{Z}_n$, we have
    \[\{kg\,|\, k\in\{1,\dots,n\}\} = \mathbb{Z}_n,\]
    then, if $M\in A_g$, we may write $M=\sum_{k=1}^n \gamma_{k}E_{kg,(k+1)g}$  and if $D\in A_0$, we may write $D=\sum_{k=1}^n d_{k}E_{kg,kg}$. 
    Direct computations show that if $D$ is invertible,
    \[DMD^{-1} = \sum_{k=1}^n {\frac{d_k}{d_{k+1}} \gamma_k} E_{kg,(k+1)g}\] 
    Now one can directly verify that if all $\gamma_k$ are nonzero, the system of equations 
    \[\frac{d_k}{d_{k+1}} \gamma_k =1,\quad \text{ for }  k\in\{1, \dots, n-1\}\]
    has a solution by defining {$d_1= 1$, and $d_k = \prod_{i=1}^{k-1} \gamma_i$}, for $k\in \{2, \dots, n-1\}$.
    
    In a similar way, one can find  a solution to the system of equations
    \[\frac{d_k}{d_{k+1}} \gamma_k = 1, \quad\text{ for } k\in\{1, \dots, n-1\}, \text{ with } \gamma_k\neq 0.\]
\end{proof}

The next two results show an analogue of a well known theorem of Shoda, Albert and Muckenhoupt (see \cite{Shoda, AM}) in the graded case, i.e., it describes the image of a (graded) commutator polynomial. 

\begin{lemma}\label{commutator}
    Let $C=\sum_{i=1}^n c_iE_{i, i+g}\in  A_g$, for some $g\neq 0$ in $\mathbb{Z}_n$. Then there exists $B\in A_g$ and $D\in A_0$ such that $C=[B,D]$.
\end{lemma}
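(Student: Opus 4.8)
The plan is to realize $C$ as a commutator $[B,D]$ with $B\in A_g$ and $D\in A_0$ by choosing $D$ diagonal and solving for $B$ entrywise. Write $B=\sum_{i=1}^n b_i E_{i,i+g}$ and $D=\sum_{i=1}^n d_i E_{i,i}$. A direct computation gives
\[
[B,D] = BD - DB = \sum_{i=1}^n b_i(d_{i+g}-d_i)E_{i,i+g},
\]
so the requirement $C=[B,D]$ becomes the system $b_i(d_{i+g}-d_i)=c_i$ for $i=1,\dots,n$ (indices mod $n$). Thus it suffices to choose the scalars $d_1,\dots,d_n\in K$ so that $d_{i+g}\neq d_i$ whenever $c_i\neq 0$; once such a $D$ is fixed, we simply set $b_i = c_i/(d_{i+g}-d_i)$ when $c_i\neq 0$ and $b_i=0$ otherwise, and then $B=\sum b_i E_{i,i+g}\in A_g$ works.

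The heart of the matter is therefore the combinatorial claim that one can pick $d_1,\dots,d_n\in K$ with $d_i\neq d_{i+g}$ for every index $i$ (it costs nothing to ask this for all $i$ rather than only those with $c_i\neq 0$). Consider the subgroup $H=\langle g\rangle\leq\mathbb{Z}_n$; it partitions $\{1,\dots,n\}$ into the cosets of $H$, and on each coset the map $i\mapsto i+g$ is a single cyclic permutation of length $|H|$. On one such coset we need a function avoiding equality along a cycle of length $\ell=|H|$. If $\ell\geq 2$ this is easy: assign values $1,2,\dots,\ell$ around the cycle, which are pairwise distinct, hence in particular consecutive ones differ (here we use $\mathrm{char}\,K=0$, so $K$ contains distinct elements $1,2,\dots,\ell$). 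The only degenerate case is $\ell=1$, i.e. $|H|=1$, which forces $g=0$; but $g\neq 0$ by hypothesis, so $\ell\geq 2$ always and the construction goes through on every coset independently.

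The main (and only) obstacle is thus the existence of the diagonal matrix $D$ with $d_i\neq d_{i+g}$ for all $i$, and as just explained this reduces to the observation that $g\neq 0$ makes every $\langle g\rangle$-orbit have length at least $2$, so a rainbow-type labeling works; no case analysis on whether $g$ is invertible in $\mathbb{Z}_n$ is needed, in contrast to Lemma~\ref{conjugating}. Assembling the pieces: fix $D\in A_0$ with $d_i\neq d_{i+g}$ for all $i$, then the matrix $B=\sum_{i=1}^n b_i E_{i,i+g}\in A_g$ with $b_i=c_i/(d_{i+g}-d_i)$ satisfies $[B,D]=C$, completing the proof.
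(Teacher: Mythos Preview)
Your proof is correct and follows essentially the same approach as the paper: compute $[B,D]=\sum_i b_i(d_{i+g}-d_i)E_{i,i+g}$ and then solve entrywise for $b_i$. The only difference is that the paper simply takes $d_1,\dots,d_n$ pairwise distinct (which immediately forces $d_{i+g}\neq d_i$ since $g\neq 0$), avoiding your orbit/coset analysis entirely.
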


\begin{proof}
    Let $C=\sum_{i=1}^n c_iE_{i, i+g}\in A_g$ and consider $D=\textrm{diag}(d_1, \dots, d_n)$, where $d_1, \dots, d_n$ are pairwise distinct elements in $K$. Write $B=\sum_{i=1}^nb_iE_{i, i+g}$; direct computations show
    \[[B, D]=\sum_{i=1}^nb_i(d_{i+g}-d_i)E_{i, i+g}.\]
    By defining $b_i=(d_{i+g}-d_i)^{-1}c_i$ for each $i$, we obtain $C=[B,D]$ and we are done.
\end{proof}

    We now turn our attention to polynomials of homogeneous degree zero.

\begin{proposition}\label{tr0}
    Let $D\in A_0$ such that $tr(D)=0$. If $g$ is invertible in $\mathbb{Z}_n$, then there exist $B\in A_g$, $C\in A_{-g}$ such that $D=[B,C]$.
\end{proposition}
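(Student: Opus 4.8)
The plan is to write $D = \mathrm{diag}(\lambda_1,\dots,\lambda_n)$ with $\sum_i \lambda_i = 0$ and look for $B = \sum_i b_i E_{i,i+g} \in A_g$ and $C = \sum_i c_i E_{i,i-g} \in A_{-g}$ whose commutator is $D$. Since $g$ is invertible in $\mathbb{Z}_n$, the indices $i, i+g, i+2g, \dots$ run through all of $\mathbb{Z}_n$, so I would first relabel (as in the proof of Lemma \ref{conjugating}) so that the relevant products telescope nicely. A direct computation gives
\[
[B,C] = BC - CB = \sum_{i=1}^n b_i c_{i+g} E_{i,i} - \sum_{i=1}^n c_i b_{i-g} E_{i,i} = \sum_{i=1}^n (b_i c_{i+g} - b_{i-g} c_i) E_{i,i},
\]
so I need to solve the system $b_i c_{i+g} - b_{i-g} c_i = \lambda_i$ for $i = 1,\dots,n$. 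Setting $u_i := b_{i-g} c_i$, this becomes $u_{i+g} - u_i = \lambda_i$, a first-order "difference equation" around the cycle $i \mapsto i+g$ (which, since $g$ is invertible, is a single $n$-cycle on $\mathbb{Z}_n$).

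Next I would solve this telescoping recurrence. Reindexing along the cycle, set $w_k := u_{kg}$ so that $w_{k+1} - w_k = \lambda_{kg}$; choosing $w_1$ arbitrarily (say $w_1 = 0$) determines $w_2, \dots, w_n$ by partial sums, and the consistency condition for closing the cycle, $\sum_{k=1}^n (w_{k+1}-w_k) = 0$, is exactly $\sum_{k=1}^n \lambda_{kg} = \sum_{i=1}^n \lambda_i = \mathrm{tr}(D) = 0$, which holds by hypothesis. Having obtained the $u_i$, I then recover $b_i, c_i$: for instance set all $c_i = 1$ and $b_{i-g} = u_i$ (equivalently $b_i = u_{i+g}$), which defines $B \in A_g$ and $C \in A_{-g}$ with $[B,C] = D$. (One could equally fix the $b_i$ as nonzero constants and solve for the $c_i$; either works since we only need existence.)

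The only real subtlety — and the step I expect to need the most care — is the bookkeeping: making sure the index $i \mapsto i+g$ genuinely generates a single $n$-cycle (this is where invertibility of $g$ is essential, and it is also the reason the hypothesis is needed rather than merely $g \neq 0$), and checking that the trace-zero condition is precisely the cocycle/consistency obstruction to solving $u_{i+g} - u_i = \lambda_i$ around that cycle. Everything else is a routine verification. I would present it by first doing the relabeling, then exhibiting the explicit solution $u_{kg} = \sum_{j=1}^{k-1}\lambda_{jg}$ and noting the wrap-around identity forces no further constraint, and finally reading off $B$ and $C$ and verifying $[B,C]=D$ by the displayed computation above.
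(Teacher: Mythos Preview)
Your proposal is correct and follows essentially the same approach as the paper: the paper fixes $C=\sum_i E_{i+g,i}$ from the outset (i.e.\ your choice $c_i=1$) so that $[B,C]=\sum_i(b_i-b_{i-g})E_{ii}$, and then solves the resulting telescoping recurrence around the single $n$-cycle generated by $i\mapsto i-g$, using $\mathrm{tr}(D)=0$ as the wrap-around consistency condition. Your intermediate substitution $u_i=b_{i-g}c_i$ is a harmless extra layer that collapses to exactly the paper's system once you set $c_i=1$.
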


\begin{proof}
        Write $D=\textrm{diag}(d_1,\dots,d_n)$, $B=\sum_{i=1}^n{b_i}E_{i,i+g}\in A_g$ and $C=\sum_{i=1}^n E_{i+g,i}\in A_{-g}$. Then
        \[ [B,C] = \sum_{i=1}^n (b_i-b_{i-g}) E_{ii}.\]
        The lemma will be proved once we show the system of equations
        \[b_{i}-b_{i-g}=d_i, \quad i=1, \dots, n\] in the variables $b_1, \dots, b_n$ has a solution.
        
        To do that, notice that since $g$ is invertible in $\mathbb{Z}_n$, we have \[\{1-kg\,|\, k\in\{0,\dots, n-1\}\}=\mathbb{Z}_n.\] Given $b_1\in K$, by defining
        \begin{align*}
            b_{1-g}  &=  b_1-d_1\\
            b_{1-2g} &= b_{1-g}-d_{1-g} = b_1-d_1-d_{1-g}\\
           & \vdots \\
            b_{1-(n-1)g} &=b_{1-(n-2)g}-d_{1-(n-2)g} = b_1-(d_1+\cdots + d_{1-(n-2)g})
        \end{align*}
            we obtain a solution to  the above system and we are done.
        \end{proof}
        
\begin{remark}
    In the previous result, the condition that $g$ has a multiplicative inverse in $\mathbb{Z}_n$ cannot be removed. For instance, let $n=4$ and $g=\overline 2\in \mathbb{Z}_n$. Simple computations show that the image of the graded polynomial $[x_1,x_2]=x_1x_2-x_2x_1$, where $\deg(x_1)=\deg(x_2)=\overline{2}$ lies in the set $\{D=\text{diag}(d_1, d_2, -d_1, -d_2)\,|\, d_1, d_2\in F\}$, which does not contain all traceless diagonal matrices.
\end{remark}

\begin{proposition}
    Let us denote by $K$ the field $\mathbb{Q}$ of rational numbers. Let $n$ be a prime number and let $f$ be a multilinear $\mathbb{Z}_n$-graded polynomial of degree $0$. Then the linear span, $L$, of $Im (f)$ on  $A=M_n(K)$ (endowed with the Vasilovsky $\mathbb{Z}_n$-grading) is one of the following:
\[\{0\}, \quad K, \quad (sl_n(K))_0, \quad A_0\]
\end{proposition}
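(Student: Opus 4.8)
The statement asserts that for $K=\mathbb{Q}$, $n$ prime, and $f$ a multilinear $\mathbb{Z}_n$-graded polynomial of homogeneous degree $0$, the linear span $L$ of $Im(f)$ on $A=M_n(K)$ is one of $\{0\}$, $K$, $(sl_n(K))_0$, or $A_0$. The plan is to first observe that $L$ is a subspace of $A_0$, the space of diagonal matrices. Indeed, each variable $x_i$ has some homogeneous degree $g_i\in\mathbb{Z}_n$ with $\sum g_i=0$, and any monomial $x_{\sigma(1)}\cdots x_{\sigma(n)}$ evaluated on homogeneous elements of the prescribed degrees lands in $A_0$; since $Im(f)\subseteq A_0$, so is its span. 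Thus $L\subseteq A_0$, and it remains to pin down which subspaces of $A_0$ can occur.

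The next step is to exploit the conjugation invariance from Lemma~\ref{cone}: $Im(f)$, hence $L$, is invariant under conjugation by $N=\sum_{i=1}^n E_{i,i+1}$. Conjugation by $N$ cyclically permutes the diagonal entries, so $L$ is a subspace of $\mathbb{Q}^n$ (identifying $A_0\cong\mathbb{Q}^n$ via the diagonal) that is invariant under the cyclic shift $\rho:(a_1,\dots,a_n)\mapsto(a_n,a_1,\dots,a_{n-1})$. Now I would invoke the representation theory of the cyclic group $\mathbb{Z}_n$ over $\mathbb{Q}$: the $\mathbb{Q}[\mathbb{Z}_n]$-module $\mathbb{Q}^n$ decomposes, since $x^n-1=(x-1)\Phi_n(x)$ factors into distinct irreducibles over $\mathbb{Q}$ when $n$ is prime, as $\mathbb{Q}\cdot(1,1,\dots,1)\;\oplus\;W$, where $W=\{(a_1,\dots,a_n):\sum a_i=0\}$ is the irreducible module on which $\rho$ acts with minimal polynomial $\Phi_n(x)$. (Here primality of $n$ is essential: it guarantees $\Phi_n$ is the \emph{only} other cyclotomic factor, so there are exactly two irreducible summands.) Consequently the only $\rho$-invariant subspaces of $\mathbb{Q}^n$ are $\{0\}$, the line of scalar matrices $K$, the hyperplane $W=(sl_n(K))_0$, and all of $\mathbb{Q}^n=A_0$. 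Since $L$ is such an invariant subspace, the classification follows immediately.

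Finally, I should check that all four values are actually attained, to make the statement sharp (the proposition as phrased only claims $L$ is \emph{one of} these, so strictly this is optional, but it is natural to include): $L=\{0\}$ when $f$ is a graded identity; $L=K$ for a graded central polynomial (e.g.\ the degree-$n$ polynomial of Example~2(2) is homogeneous of degree $0$ and central when all variables have degree $\overline 1$—wait, that has degree $\overline n=\overline 0$, good); $L=(sl_n(K))_0$ is realized by a commutator $[x,y]$ with $\deg x=g$, $\deg y=-g$ for $g$ invertible, via Proposition~\ref{tr0}; and $L=A_0$ can be obtained by adding a central polynomial to such a commutator. The main obstacle is the representation-theoretic input: one must be careful that the decomposition of $\mathbb{Q}^n$ under the cyclic shift has no further invariant subspaces, which is exactly where the hypothesis that $n$ is prime (not merely that we work over $\mathbb{Q}$) enters—for composite $n$ the regular representation has more rational irreducible constituents and the conclusion genuinely fails, consistent with the Remark preceding the proposition.
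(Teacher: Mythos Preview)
Your argument is correct and, at its core, rests on the same arithmetic fact as the paper's proof: over $\mathbb{Q}$, with $n$ prime, $x^n-1=(x-1)\Phi_n(x)$ is the complete factorisation into irreducibles. The route, however, is genuinely different in style. The paper picks a specific nonscalar element $D\in Im(f)$, forms its cyclic conjugates $D_k=N^{-k}DN^{k}$, and determines the dimension of their span by computing the rank of the associated circulant matrix; the circulant criterion (rank $=n-\deg\gcd(P,x^n-1)$) then reduces to the irreducibility of $\Phi_n$. You bypass this element-by-element analysis entirely: once $L$ is an $N$-stable subspace of $A_0\cong\mathbb{Q}^n$, the decomposition of the regular $\mathbb{Q}[\mathbb{Z}_n]$-module into the two non-isomorphic simples $\mathbb{Q}\cdot(1,\dots,1)$ and $W=(sl_n)_0$ lists all invariant subspaces at once. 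This is cleaner and more conceptual; in particular it avoids the small case split the paper has to make (nonzero trace versus trace zero), and it makes transparent why both hypotheses---$K=\mathbb{Q}$ and $n$ prime---are needed simultaneously: they are exactly what forces the cyclic module to have only two irreducible constituents. The paper's approach, on the other hand, is more self-contained for a reader unfamiliar with the representation theory of cyclic groups, and the circulant machinery it invokes gives, as a by-product, an explicit basis of $L$ coming from a single element of the image.

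Two minor remarks. First, your parenthetical about the ``Remark preceding the proposition'' points to the wrong remark in the paper; the relevant observation there is the Remark \emph{following} the proposition, which notes that the proof is specific to $\mathbb{Q}$. Second, your closing discussion of attainability is a nice complement but, as you note, is not required for the statement as phrased.
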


\begin{proof}

    If $f$ is not an identity and the image of $f$ contains only scalar matrices, then of course $L=K$.
    
    Assume now that $f$ is not an identity nor a central polynomial of $M_n(K)$. 
    
    Since $f$ is not an identity nor a central polynomial, there exists an evaluation of $f$ which is a nonscalar matrix.
    
    Let us first assume there is an element $D=\sum_{i=1}^n\alpha_i E_{i,i}$ in the image, satisfying $\sum \alpha_i\neq 0$. Write $D_0=D, D_1=N^{-1}DN, \cdots, D_{n-1} = N^{-(n-1)}DN^{n-1}$. Once we show that $D_0, \dots, D_{n-1}$ are linearly independent, they generate an $n$-dimensional subspace in $L$. Since $L$ is a vector subspace of the $n$-dimensional space $A_0$, we will conclude that $L=A_0$. The elements $D_0, \dots, D_{n-1}$ can be considered as $n$-tuples of elements of $K$. if $D=D_0=(\alpha_1, \dots, \alpha_n)$, then $D_1=(\alpha_n, \alpha_1, \dots, \alpha_{n-1}), \dots, D_{n-1}=(\alpha_2, \dots, \alpha_{n}, \alpha_1)$. These are linearly independent over $K$, if and only if the determinant of the matrix below is nonzero.
      \[C=\begin{pmatrix} \alpha_1 & \alpha_2 & \cdots & \alpha_{n-1} & \alpha_n \\
                        \alpha_n & \alpha_1 & \cdots & \alpha_{n-2} & \alpha_{n-1} \\
                         \vdots & \vdots&  \ddots & \vdots & \vdots\\
                         \alpha_ 3 & \alpha_4  & \cdots & \alpha_1 & \alpha_2\\
                        
                        \alpha_2 & \alpha_3 & \cdots  & \alpha_n & \alpha_1 \\
                        \end{pmatrix}.\]
                        
    Matrices of the above type are called \emph{circulant matrices}. It is well known that $C$ is nonsingular if and only if the polynomial $P(x)=\alpha_1+\alpha_2x+\cdots +\alpha_nx^{n-1}\in K[x]$ is coprime to the polynomial $x^n-1$ (see for instance Corollary 10 of \cite{Kra}). Since $\alpha_1+\cdots + \alpha_n\neq 0$, $x=1$ is not a root of $P(x)$. Since $n$ is prime, $\frac{x^n-1}{x-1}$ is irreducible over $K$, and this implies $C$ is nonsingular and the linear span of $Im (f)$ is $A_0$.
    
    Finally, we need to prove that if $Im (f)$ contains only trace zero matrices, then $Im (f)$ is $(sl_n)_0$. We argue as above. The only difference is that we need to prove that the elements $D_0, \dots, D_{n-1}$ generate an $n-1$-dimensional subspace of $A_0$. This is equivalent to show that the rank of $C$ is $n-1$, but it is well known that the rank of the circulant matrix $C$ above is $n-d$, where $d$ is the greatest common factor of $P(X)$ and $x^n-1$ (again we address the reader to the paper \cite{Kra}). Now we have $\alpha_1+\cdots +\alpha_n=0$, $x=1$ is a root of $P(X)$, then because $\frac{x^n-1}{x-1}$ is irreducible over $\mathbb{Q}$, we obtain $d=1$ and the rank of $C$ is $n-1$. As a consequence, the linear span of $Im(f)$ is $(sl_n)_0$.
    \end{proof}
    
    \begin{remark}
        The above proof holds only for $K=\mathbb{Q}$. It would be interesting to prove it for an arbitrary field $K$.
    \end{remark}
    
    The analogue of the L'vov-Kaplansky conjecture in the graded case (for matrices of prime order) can now be stated as
    \begin{conjecture}
        Let $K$ be a field and $n$ be a prime number. If $f\in K\langle X| \mathbb{Z}_n\rangle$ is a multilinear $\mathbb{Z}_n$-graded  polynomial then $Im(f)$ is one of the following:
        
\[\{0\},\quad  K, \quad  sl_n(K)_0, \quad \text{ or } \quad M_n(K)_g, \text{ for some } g\in G.\]

    \end{conjecture}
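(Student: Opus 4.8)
The statement is a conjecture, so what follows is a strategy together with the point where I expect it to stall. Since $\Z_n$ is abelian, every monomial of a multilinear $f=\sum_{\sigma\in S_m}\alpha_\sigma x_{\sigma(1)}\cdots x_{\sigma(m)}$ has the same $\Z_n$-degree $g:=\deg(x_1)+\cdots+\deg(x_m)$, so $f$ is $\Z_n$-homogeneous of degree $g$; hence by Lemma~\ref{cone} the image $Im(f)$ is invariant under conjugation by $N$ and by every invertible diagonal matrix, and by multilinearity it is a cone. First I would dispose of the degenerate cases: if $f$ is a graded identity then $Im(f)=\{0\}$, and if $f$ is a graded central polynomial but not an identity then $\{0\}\subsetneq Im(f)\subseteq K$, whence $Im(f)=K$ since $Im(f)$ is a cone. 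From now on assume $f$ is neither, and split according to whether $g=0$.

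For $g\neq 0$: as $n$ is prime, $g$ is invertible in $\Z_n$, and by Lemma~\ref{units} every evaluation of $f$ at matrix units is a scalar multiple of an off-diagonal matrix unit, so $f$ is automatically noncentral; by the lemma asserting that the linear span of $Im(f)$ is $A_g$ whenever $f$ is not an identity, it only remains to prove $Im(f)=A_g$. Writing a general element of $A_g$ as $M=\sum_{i=1}^n\gamma_i E_{i,i+g}$ and combining Lemma~\ref{conjugating} with the cone and diagonal-conjugation invariance, I would reduce this to producing values of $f$ that (a) meet every ``degenerate shape'', i.e. for each proper subset $S\subsetneq\{1,\dots,n\}$ contain some element of $A_g$ supported exactly on the positions in $S$ (cyclic shifts of $S$ being free by $N$-conjugation), and (b) among the invertible elements realize every value of the product $\gamma_1\cdots\gamma_n$ modulo the subgroup of $n$-th powers of $K\setminus\{0\}$, which is the only invariant separating orbits under the symmetries at hand. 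To attack both, I would evaluate $f$ on generic homogeneous matrices $X_j=\sum_k\xi_{j,k}E_{k,\,k+\deg(x_j)}$, so that $f(X_1,\dots,X_m)=\sum_k P_k(\xi)E_{k,k+g}$ with $P_{k+1}$ the cyclic shift of $P_k$; then $Im(f)$ is precisely the closure under scaling and cyclic permutation of the ``value curve'' $\xi\mapsto(P_1(\xi),\dots,P_n(\xi))$, and the task becomes to show that this curve sweeps out all of $A_g\cong K^n$.

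For $g=0$: if every variable has degree $0$, then $A_0$ is commutative and $f(a_1,\dots,a_m)=\big(\sum_\sigma\alpha_\sigma\big)a_1\cdots a_m$, so $Im(f)$ is $\{0\}$ or $A_0$ and there is nothing to prove. Otherwise, by the preceding proposition classifying the span of $Im(f)$ for $f$ of degree $0$ (proved there over $\mathbb{Q}$, and which I would expect to survive over any admissible $K$) the linear span of $Im(f)$ is $\{0\}$, $K$, $(sl_n(K))_0$ or $A_0$, and I may assume it is one of the last two. Now $Im(f)$ is a cone inside the diagonal algebra $A_0$, invariant under the cyclic permutation of diagonal entries induced by $N$-conjugation, and I want to upgrade it to its span. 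Starting from a noncentral diagonal value $D$ --- whose cyclic shifts already span the target by the circulant-matrix/irreducibility argument of that proposition, using that $n$ is prime --- I would try to perturb a single evaluation so as to reach an arbitrary point of the span. The catch is that invariance under cyclic permutations is not in itself sufficient, since there exist cyclic-invariant cones in $K^n$ that are not subspaces; so the finer structure of $f$ (its shape, or the known description of the graded identities and central polynomials of $M_n(K)$) must enter.

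The main obstacle is the same in both cases: passing from ``$Im(f)$ spans $V$'' to ``$Im(f)=V$'', i.e. showing that the value curve above, closed under scaling and cyclic shift, hits every line of $V$. This is a graded incarnation of exactly the difficulty that keeps the classical L'vov--Kaplansky conjecture open, so I would not expect a uniform solution; over a general field the coset problem modulo $n$-th powers when $g\neq 0$, and the span classification itself when $g=0$, add further friction. This is why only special cases look accessible at present: degree $2$ over $M_n(K)$, where the quadratic systems arising from generic evaluations can be solved once $K$ is quadratically closed and $\operatorname{char}K=0$ or $\operatorname{char}K>n$, and arbitrary degree over $M_2(K)$, where one can lean on the primeness of the ideal of identities of $M_2(K)$ as in the ungraded solution.
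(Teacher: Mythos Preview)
Your reading is right: the statement is presented in the paper as a conjecture, with no proof offered, so there is nothing to compare your argument against line by line. Your framing --- reduce to a fixed $\Z_n$-degree $g$, invoke the cone/conjugation invariances, and then try to upgrade ``$Im(f)$ spans $V$'' to ``$Im(f)=V$'' --- is exactly the shape of the problem the paper sets up, and your diagnosis that this last step is the graded avatar of the classical L'vov--Kaplansky obstruction is on the mark. The special cases you single out as tractable (multilinear degree~$2$ over $M_n(K)$ for $K$ quadratically closed of suitable characteristic, and arbitrary multilinear degree over $M_2(K)$) are precisely the ones the paper goes on to settle, by essentially the mechanisms you name.

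One caveat worth flagging: when you write that the span classification for $g=0$ ``proved there over $\mathbb{Q}$ \dots\ I would expect to survive over any admissible $K$'', the paper is less sanguine --- it explicitly remarks that the argument is specific to $\mathbb{Q}$ and that extending it would be of interest. The point of failure is concrete: the circulant-rank step uses that $(x^n-1)/(x-1)$ is irreducible over the base field, which breaks once $K$ contains a primitive $n$-th root of unity. So over a general $K$ even the \emph{span} classification for degree-$0$ multilinear graded polynomials is not yet in hand, which is additional friction beyond the ``span $\Rightarrow$ image'' step you already isolate.
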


\section{The image of multilinear polynomials of degree 2 on matrix algebras}
We are going to deal with multilinear polynomials of degree 2 over quadratically closed fields of sufficiently large characteristic.

\begin{lemma}\label{finally}
    Let $n$ be an odd prime number and $K$ be a quadratically closed field of characteristic zero or greater than $n$. Let $f(x_1,x_2)=x_1x_2-\alpha x_2x_1\in K\langle x_1,x_2\rangle$ be a graded polynomial with $\deg (x_1) = g\in \mathbb{Z}_n$ and $\deg (x_2) = h\in \mathbb{Z}_n$  and $\alpha$ be an $n$-th root of 1. If $f$ is not a graded polynomial identity for $M_n(K)$, then the image of $f$ on $M_n(K)$ is $A_{g+h}$. 
\end{lemma}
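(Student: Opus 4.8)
The plan is to prove that $Im(f)$, which lies in the homogeneous component $A_{g+h}$ because $f$ is $\mathbb Z_n$-homogeneous of degree $g+h$, is in fact the whole of $A_{g+h}$, by exploiting its symmetries. Note that $Im(f)$ is closed under multiplication by scalars (since $f$ is multilinear) and, by Lemma~\ref{cone}, invariant under conjugation by $N$ and by every invertible diagonal matrix. Writing a homogeneous element of invertible degree $k$ as $\sum_{i\in\mathbb Z_n}a_iE_{i,i+k}$, one computes
\[
 f\Bigl(\sum_i a_iE_{i,i+g},\ \sum_j b_jE_{j,j+h}\Bigr)=\sum_{i\in\mathbb Z_n}\bigl(a_ib_{i+g}-\alpha\,b_ia_{i+h}\bigr)\,E_{i,i+g+h},
\]
so the task is to prescribe arbitrary values for the $n$ scalars $a_ib_{i+g}-\alpha\,b_ia_{i+h}$.

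I would first dispose of the degenerate degree assignments. If $g=0$, the $i$-th entry above equals $b_i(a_i-\alpha a_{i+h})$; as $K$ is infinite and $a\mapsto a_i-\alpha a_{i+h}$ is a nonzero linear form, one picks $a$ outside finitely many hyperplanes so that each $a_i-\alpha a_{i+h}\neq 0$ and then solves for the $b_i$, giving $Im(f)=A_h=A_{g+h}$; the case $h=0$ is symmetric, while if $g=h=0$ then $f$ acts on diagonal matrices as $(1-\alpha)x_1x_2$, so that "$f$ is not an identity" forces $\alpha\neq 1$ and again $Im(f)=A_0$. Hence I may assume $g$, $h$ and $g+h$ are all invertible in $\mathbb Z_n$; the remaining case $g+h=0$ with $g,h\neq 0$ is governed by the earlier analysis of degree-zero polynomials (cf. Proposition~\ref{tr0}) and I do not treat it further here. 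As observed above, $E_{i,i+g+h}\in Im(f)$ for every $i$, so by rescaling $cE_{i,i+g+h}\in Im(f)$ for all $c\in K$; in particular every matrix of $A_{g+h}$ with at most one nonzero entry lies in $Im(f)$. For an arbitrary $M\in A_{g+h}$, Lemma~\ref{conjugating} provides $D\in A_0$ such that $DMD^{-1}$ has all its nonzero entries equal to $1$ but at most one, or has all entries in $\{0,1\}$; since $Im(f)$ is conjugation-invariant it suffices to realize these normalized matrices.

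The core of the argument is that the remaining normalized matrices cannot be obtained by solving the system linearly: for every fixed $b$, the matrix of the linear map $a\mapsto f(a,b)$ has determinant $\prod_i b_i+(-\alpha)^n\prod_i b_i$, which vanishes identically exactly because $\alpha^n=1$ and $n$ is odd, so that map is always singular and its image is a proper hyperplane. The remedy is to retain one coordinate of $b$ as a free parameter $t$, setting $b_j=1$ otherwise (or, in the same spirit, to use a quadratic substitution such as $b_j=a_{j-g}$, turning the $i$-th equation into $a_i^2-\alpha a_{i-g}a_{i+h}=m_i$): this breaks the cyclic coupling, and the solvability of the resulting system reduces to a single polynomial equation $q_M(t)=0$ over $K$ of degree at most two, whose coefficients are linear functionals of $M$. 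Because $K$ is quadratically closed and $\mathrm{char}\,K=0$ or $>n$ (so $\mathrm{char}\,K\neq 2$ and $\mathrm{char}\,K$ does not divide $n!$), this equation has a root $t\in K$, and back-substitution then produces the required $a$ (and $b$). Running this for all normalized targets — and, for the few $M$ on which one choice of parameter degenerates, either switching which coordinate carries the parameter or appealing to the first part of the proof (those $M$ being supported on very few diagonals) — places every element of $A_{g+h}$ in $Im(f)$, so that $Im(f)=A_{g+h}$.

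The step I expect to be the main obstacle is this last one: setting up the parametrized system so that the auxiliary polynomial $q_M$ is provably nonzero (hence genuinely has a root in $K$), and checking that the exceptional, highly degenerate targets are precisely those handled by the span result. This is exactly where the arithmetic hypotheses on $K$ are spent: quadratic closedness to split the auxiliary quadratics, and $\mathrm{char}\,K=0$ or $>n$ to keep invertible the scalars $1-\zeta$ for $\zeta$ an $n$-th root of $1$ and the integers $\leq n$ that enter through the underlying circulant computations. Without these hypotheses $Im(f)$ need not exhaust $A_{g+h}$; it is only guaranteed to be a union of hyperplanes.
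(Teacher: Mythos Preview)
Your strategy coincides with the paper's: normalize the target $M$ via Lemmas~\ref{cone} and~\ref{conjugating}, fix most entries of one of the two matrices to $1$, and reduce the compatibility condition on the remaining entries to a single quadratic over $K$. The paper's concrete execution is to set $c_{ih}=1$ for $i\notin\{t,t+1\}$ (where $g=th$) and $c_{th}=c_{(t+1)h}=x$; the recursion for the $b_{kh}$ then collapses to an equation $\beta_0+\beta_1 x+\beta_2 x^2=0$ whose coefficients are sums $\sum_{r\in S}\alpha^r\gamma_{rh}$, and one argues---using the normalization $\gamma_{rh}\in\{0,1\}$ for $r\neq t$ together with the minimal polynomial of $\alpha$ over the prime field---that at least two of the $\beta_j$ are nonzero, whence a nonzero root exists in the quadratically closed field $K$.

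Your write-up has two genuine gaps. First, the case $g+h=0$ with $g,h\neq 0$: Proposition~\ref{tr0} treats only the commutator $\alpha=1$ (and even then yields image $(sl_n)_0$, not $A_0$), so it says nothing for a primitive $n$-th root $\alpha\neq 1$; you have simply not proved this case. The paper does not split it off---its argument for $g,h\neq 0$ is meant to run uniformly in $g+h$---so deferring it was the wrong move. Second, and more importantly, your ``core of the argument'' is only a plan. You assert that specializing all but one $b_j$ (or setting $b_j=a_{j-g}$) yields a degree-$\le 2$ polynomial $q_M(t)$ that is ``provably nonzero'', but you never write $q_M$ down, never say which index carries the parameter or why that choice succeeds for every normalized target, and never verify nonvanishing. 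That verification is exactly where the hypotheses are consumed, and it is where the paper spends its effort: one must rule out the simultaneous vanishing of the $\beta_j$ via the irreducibility of $1+x+\cdots+x^{n-1}$ (when $\alpha$ is primitive) or via $\mathrm{char}\,K=0$ or $>n$ (when $\alpha=1$, so that a sum of fewer than $n$ ones is nonzero). Until you make the specialization explicit and carry out that check, the argument does not close.
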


\begin{proof}
     If $g=h=0$, then modulo the graded identities of $M_n(K)$, $f(x_1,x_2) = (1-\alpha)x_1x_2$. Then $f$ is a graded identity if $\alpha = 1$ and the image of $f$ is $A_{0}$ otherwise. Hence, we may assume $g$ or $h$ is nonzero. Let us assume without loss of generality $h\neq 0$. Write $B=\sum_{i=1}^n b_i E_{i,i+g}$ and $C=\sum_{i=1}^{n} c_i E_{i,i+h}$. Then \[f(B,C) = BC-\alpha CB = \sum_{i=1}^n(b_ic_{i+g} - \alpha c_ib_{i+h}) E_{i,i+g+h}.\]
    Let $M=\sum_{i=1}^n \gamma_i E_{i,i+g+h}\in A_{g+h}$. We will show that there exist $B$ and $C$ as above such that $f(B,C)=M$.
    
    Observe that the the above claim is equivalent to find a solution to the system of equations
    \begin{align}\label{system}
        b_i c_{i+g} - \alpha c_i b_{i+h} = \gamma_i, \quad i\in \{1, \dots, n\}.
    \end{align}

First we notice that if $g=0$, the above system of equation become
 \begin{align}\label{system1}
        c_i(b_i  - \alpha b_{i+h}) = \gamma_i, \quad i\in \{1, \dots, n\}.
    \end{align}
then a solution to the above system of equations can be found in a similar way to the proof of Lemma \ref{commutator}.

So we may assume from now on that $g$ and $h$ are different from zero in $\mathbb{Z}_n$. Notice that if all $\gamma_i$ are zero, $B=C=0$, provide a solution to the above system. Also, if we admit only one entry to be nonzero, a solution to the above equation may be found. Indeed, by Lemma \ref{cone}, we may assume such nonzero entry is $\gamma_0$. In this case, $b_0=1$, $b_i=0$, for $i>1$, $c_g=\gamma_0$ and $c_i=0$, for $i\neq g$ provide a solution to the system of equations (\ref{system1}). So, from now on we may assume at least two of the $\gamma_i$ in $M$ are nonzero.

Since $n$ is prime, and $h\neq 0$, we have in $\mathbb{Z}_n$, $\{1, \dots, n\} = \{0, h, \dots, (n-1)h\}$. Hence the above system of equations can be written as 
    \[b_{kh} c_{kh+g} - \alpha c_{kh} b_{(k+1)h} = \gamma_{kh}, \quad k\in \{0, \dots, n-1\}.\]
    
If we assume $c_i\neq 0$ for all $i$, we have 
\begin{align*}\label{recursion}
b_{(k+1)h} = \dfrac{b_{kh}c_{kh+g}-\gamma_{kh}}{\alpha c_{kh}}, \quad k\geq 0.
\end{align*}

An easy computation shows that for every $k\geq 1$, we have 

\begin{equation}\label{b}
    b_{kh} = \dfrac{b_0 \displaystyle\prod_{i=0}^{k-1} c_{g+ih} - \sum_{r=0}^{k-1} \alpha^r\gamma_{rh} \prod_{i=0}^{r-1}c_{ih} \prod_{i=r+1}^{k-1}c_{g+ih}}{\displaystyle \alpha^k\prod_{i=0}^{k-1}c_{ih}}
\end{equation} 

The above expressions will provide a solution to the system of equations (\ref{system1}), but we need to take into account that condition $b_0 = b_{nh}$ holds. 

Substituting $k$ by $n$ in Equation (\ref{b}), since $nh= 0 \mod n$ and $\alpha^n=1$, we obtain

\begin{align*}
     b_{0} = \dfrac{b_0 \displaystyle\prod_{i=0}^{n-1} c_{g+ih} - \sum_{r=0}^{n-1} \alpha^r\gamma_{rh} \prod_{i=0}^{r-1}c_{ih} \prod_{i=r+1}^{n-1}c_{g+ih}}{\displaystyle\alpha^n\prod_{i=0}^{n-1}c_{ih}};
\end{align*}
since $\displaystyle\prod_{i=0}^{n-1} c_{g+ih} = \displaystyle\prod_{i=0}^{n-1}c_{ih}$, we get:
\begin{align}\label{zero}
    \sum_{r=0}^{n-1} \alpha^r\gamma_{rh} \prod_{i=0}^{r-1}c_{ih} \prod_{i=r+1}^{n-1}c_{g+ih} = 0.
\end{align}

Now, showing the existence of a solution $b_1, \dots, b_{n}$ is equivalent to show that there exist $c_1, \dots, c_n\in K\setminus \{0\}$, such that Equation (\ref{zero}) holds.

Let $g=th$, with $t\in \{1, \dots, n-1\}$ (recall that $n>2$). By letting $c_i=1$, for $i \neq th, (t+1)h$,  Equation (\ref{zero}) becomes

\[\left(\sum_{r=0}^{t-1}\alpha^r\gamma_{rh}\right) + \alpha^t\gamma_t c_{th} + \left(\sum_{r=t+1}^{n-1}\alpha^r\gamma_{rh}\right)c_{th}c_{(t+1)h}=0\]

Now notice that if we set $c_{(t+1)h} = c_{th}  = x$, the above become
\begin{align}\label{quadratic} 
    \left(\sum_{r=0}^{t-1}\alpha^r\gamma_{rh}\right) + \alpha^t\gamma_t x + \left(\sum_{r=t+1}^{n-1}\alpha^r\gamma_{rh}\right)x^2=0
\end{align}
and it is enough to show it has a nonzero solution.

Recall that the image of a polynomial is invariant under conjugation by homogeneous invertible matrices (Lemma \ref{cone}). Hence, we may assume $\gamma_t \neq 0$, since conjugating by some power of the matrix $N=\sum_{i=1}^nE_{i,i+1}$, one of the nonzero {entries} of $M$ lies in line $th$ (recall that the case where all $\gamma_i$ are zero has already been considered).

Also, by Lemma \ref{conjugating} we may assume  $\gamma_{rh}\in \{0,1\}$ if $r\neq t$. As a consequence, since we are considering the case in which  at least two entries of $M$ are nonzero, we obtain that at least two of the coefficients of equation (\ref{quadratic}) are nonzero. Indeed, if $\alpha=1$, the coefficients are a sum of less than $n$ times the unity 1, and if $\alpha$ is a primitive $n$-th root of 1, its minimal polynomial over the prime subfield of $K$ is $1+ x+\cdots + x^{n-1}$. This proves Equation (\ref{quadratic}) has a nonzero solution.
\end{proof}

We are now in position to give a complete description of images of multilinear $\mathbb{Z}_n$-graded  polynomials of degree 2 on the matrix algebra $M_n({K})$, provided $n$ is a prime number and $K$ is a quadratically closed field of characteristic greater than $n$.

\begin{thm}
    Let $f(x_1,x_2)$ be a multilinear $\mathbb{Z}_n$-graded  polynomial and $n$ be an odd prime number. Then the image of $f$ evaluated on the $\mathbb{Z}_n$-graded algebra $A=M_n(K)$ is one of the following:
    \[\{0\}, (sl_n)_ 0, \text{ or } A_g, \text{ for some } g\in {G} \]
\end{thm}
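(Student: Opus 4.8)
The plan is to reduce $f$ to a short normal form and then read off the image from the structural lemmas above. Write $f(x_1,x_2)=\alpha\,x_1x_2+\beta\,x_2x_1$ with $\deg(x_1)=g$ and $\deg(x_2)=h$ in $\mathbb Z_n$; both monomials have $\mathbb Z_n$-degree $g+h$, so $f$ is homogeneous of degree $g+h$ and hence $Im(f)\subseteq A_{g+h}$, and $Im(f)\subseteq A_0$ when $g+h=0$. If $\alpha=\beta=0$ then $Im(f)=\{0\}$. Since $\{0\}$, $(sl_n)_0$ and $A_g$ are all linear subspaces, hence invariant under multiplication by a nonzero scalar, I may replace $f$ by any nonzero multiple of itself, so it suffices to treat the monomial $f=x_1x_2$ (the case $x_2x_1$ being identical) and the binomial $f=x_1x_2-\alpha x_2x_1$ with $\alpha\in K\setminus\{0\}$.

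For the monomial, evaluating at $B=\sum_{i=1}^n b_iE_{i,i+g}\in A_g$ and $C=\sum_{i=1}^n E_{i,i+h}\in A_h$ gives $f(B,C)=\sum_{i=1}^n b_i\,E_{i,i+g+h}$, which runs over all of $A_{g+h}$; thus $Im(f)=A_{g+h}$. For the binomial I would split on whether $\alpha=1$. If $\alpha=1$ then $f=[x_1,x_2]$: when $g=h=0$ the component $A_0$ is commutative, so $f$ is a graded identity and $Im(f)=\{0\}$; when $g+h=0$ but $g\neq 0$ (so $g$ is invertible in $\mathbb Z_n$), a direct trace computation on $B\in A_g$, $C\in A_{-g}$ shows that every value of $f$ is traceless, hence $Im(f)\subseteq(sl_n)_0$, while Proposition~\ref{tr0} gives the reverse inclusion, so $Im(f)=(sl_n)_0$; and when $g+h\neq 0$ the evaluation $[E_{1,1+g},E_{1+g,1+g+h}]=E_{1,1+g+h}\neq 0$ shows $f$ is not a graded identity, whence Lemma~\ref{finally} gives $Im(f)=A_{g+h}$. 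If $\alpha\neq 1$ then $f$ is never a graded identity of $M_n(K)$: when $g=h=0$ we have $f=(1-\alpha)x_1x_2$ modulo $T_{\mathbb Z_n}(M_n(K))$, so $Im(f)=A_0$ (take $x_2$ equal to the identity matrix); and when $\alpha$ is not an $n$-th root of unity (and $g$ or $h$ is nonzero), evaluating at $C=\sum_{i=1}^n E_{i,i+h}$ reduces the task to solving $b_i-\alpha b_{i+h}=\gamma_i$ for every right-hand side, which is possible because the relevant matrix is $1-\alpha$ (if $h=0$) or $I-\alpha P$ with $P$ a cyclic shift and $\det(I-\alpha P)=1-\alpha^n\neq 0$ (if $h\neq 0$); hence $Im(f)=A_{g+h}$.

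The one remaining case — $f=x_1x_2-\alpha x_2x_1$ with $\alpha$ a nontrivial $n$-th root of unity and $g$ or $h$ nonzero — is where I expect the real difficulty to lie, and it is handled by Lemma~\ref{finally}: there $f$ is not a graded identity, so $Im(f)=A_{g+h}$. The delicate sub-case inside that lemma is $g+h=0$, in which $f$ is $\mathbb Z_n$-homogeneous of degree $0$ and $Im(f)$ is forced into the commutative subalgebra $A_0$; the argument there must exploit the conjugation-invariance of $Im(f)$ under the matrices $N$ and $D$ (Lemmas~\ref{cone} and~\ref{conjugating}) to bring a nonzero value of $f$ into a standard position, and then reduce the existence of a preimage to the solvability of a single quadratic equation over $K$, invoking the hypotheses that $K$ is quadratically closed of characteristic $0$ or $>n$ and that $n$ is an odd prime (so that every nonempty proper subsum $\sum_{r\in S}\alpha^r$, $S\subsetneq\{0,\dots,n-1\}$, is nonzero). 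Combining the outcomes of all the cases above, $Im(f)$ is one of $\{0\}$, $(sl_n)_0$, or $A_g$ for some $g\in G$, which is the assertion of the theorem.
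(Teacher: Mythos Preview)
Your proof is correct and follows the same overall strategy as the paper: normalize to $f=x_1x_2-\alpha x_2x_1$, solve the explicit linear system $b_i-\alpha b_{i+h}=\gamma_i$ (with $C$ taken to be the all-ones matrix in $A_h$) when $\alpha^n\neq 1$, and invoke Lemma~\ref{finally} when $\alpha$ is an $n$-th root of unity. The one organizational difference is that you single out the commutator $\alpha=1$ and handle the sub-case $g+h=0$, $g\neq 0$ directly via Proposition~\ref{tr0}, correctly obtaining $Im(f)=(sl_n)_0$; the paper's proof routes this through Lemma~\ref{finally}, whose stated conclusion $Im(f)=A_{g+h}$ does not literally account for that outcome (the commutator of elements of opposite degree is always traceless), so your explicit treatment here is a small gain in precision rather than a different method.
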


\begin{proof}
    Assuming $f$ is nonzero, we may write $f(x_1,x_2)=x_1x_2-\alpha x_2x_1$, for some $\alpha\in K$. Let $\deg(x_1)=g$ and $\deg(x_2)=h$. If $g=h=0$ then modulo the graded identities of $M_n(K)$, we have $x_1x_2=x_2x_1$. Hence the image of $f$ on $M_n(K)$ equals the image of the polynomial $(1-\alpha)x_1x_2$. Of course, such image is $\{0\}$ if $\alpha=1$, and is $A_0$ if $\alpha \neq 1$. Hence we may assume that $g$ or $h$ is not equal to $ 0$. Without loss of generality, we may assume $h\neq 0$. 
    
    The case $\alpha=0$ is trivial: the image is $A_{g+h}$, so we consider $\alpha \neq 0$. Now we have two more cases to consider, namely $\alpha$ is an $n$-th root of 1 or not.

    If $\alpha$ is a root of 1, Lemma \ref{finally} settles the case.
    
    So it remains to consider the case $\alpha$ is not a root of 1.

    If $M=\sum_{i=1}^n \gamma_i E_{i,i+g+h}\in A_{g+h}$, we need to find $B\in A_g$ and $C\in A_h$ such that $f(B,C)=M$.
    
    For, let us write $B=\sum_{i=1}^n b_i E_{i,i+g}$ and $C=\sum_{i=1}^n E_{i,i+h}$. Direct computations show that
    \[f(B,C)=\sum_{i=1}^n (b_i - \alpha b_{i+h})E_{i,i+g+h}.\]
    In order to obtain $B$ and $C$ satisfying $f(B,C)=E$, we need to find $b_1, \dots, b_n\in K$ such that 
    \begin{equation}\label{eq}
        b_i-\alpha b_{i+h}=\gamma_i, \qquad i\in\{1, \dots, n\}
    \end{equation}
   Since $h$ is nonzero, it is invertible in $\mathbb{Z}_n$, because $n$ is prime. As a consequence,
    \[ \{1, \dots, n\} = \{0, h, \dots, (n-1)h\}.\]
    and \[\{b_1, \dots, b_n\} = \{b_0, b_{h}, \dots, b_{(n-1)h}\}.\]
    Now the system of equations (\ref{eq}) can be written as
    
    \begin{equation}
        b_{kh}-\alpha b_{(k+1)h}=\gamma_{kh}, \qquad k\in\{0, \dots, n-1\}
    \end{equation}

    and a solution is given by
    
    \[b_1=\frac{\gamma_0+\alpha\gamma_{h}+\cdots+\alpha^{n-1}\gamma_{(n-1)h}}{1-\alpha^n}\]
    \[b_{kh}=\dfrac{b_0-(\gamma_0+\alpha\gamma_{h}+\cdots + \alpha^{k-1}\gamma_{(k-1)h})}{\alpha^k}, \quad k\in \{1,\dots, n-1\}\]
    
    Observe that since $\alpha$ is not a root of 1, $1-\alpha^n\neq 0$ and the above expressions are well defined.
\end{proof}

\begin{remark}
    Although the above theorem was stated only for odd prime numbers, there is an analogue of it for $n=2$. It will be stated in the next section. The only difference is that we may have a central polynomial in this case, i.e., the image of $f$ can also be $K$. For instance, the polynomial $p(x_1,x_2) = x_1x_2 +x_2x_1$, with $\deg(x_1) = \deg(x_2) = 1$ is a nontrivial graded central polynomial of $M_2(K)$.
\end{remark}

\section{The image of multilinear graded polynomials on $2\times 2$ matrix algebras}
In this section we compute explicitly the image of any  multilinear graded polynomial evaluated on $2\times2$ matrix algebras endowed with the Vasilovsky's grading.

\begin{definition}
Let $A$ be a $G$-graded algebra. We say $A$ is a \textit{$G$-graded domain} if $ab=0$ implies $a=0$ or $b=0$ for $a,b\in h(A)$. Moreover, we say $A$ is a \textit{$G$-graded division algebra} if any homogeneous element is invertible in $A$.
\end{definition}

Let  $A=M_n(K)$ be $\mathbb{Z}_n$-graded with the Vasilovsky grading. We consider \[Y=\{y_{ij}^{(r)}| i,j\in\{1,\ldots,n\}, r\in\mathbb{N}\}\] and for every $r\in\mathbb{N}$, let us consider the generic $n \times n$ matrices with entries from the algebra of the commutative polynomials $K[Y]$:
\begin{eqnarray*}
\xi_g^{r} &=& \sum_{j-i=g}y_{ij}^{(r)}E_{i,j}.
\end{eqnarray*}

We shall denote by $Gen^{\mathbb{Z}_n}(A)$ the algebra generated by the $\xi_g^r$'s and call it \textit{$\mathbb{Z}_n$-graded generic matrix algebra} associated to the Vasilovsky ${\mathbb{Z}_n}$-grading of $M_n(K)$.  The algebra $Gen^{\mathbb{Z}_n}(A)$ is called the graded generic algebra of $A$. It is well known \[Gen^{\mathbb{Z}_n}(A)\cong \dfrac{\F}{(\F\cap T_G(M_n(K))}.\]
The following is well-known 

\begin{proposition}\label{domain}
    The algebra $Gen^{\mathbb{Z}_n}(A)$ defined above is a graded domain.
\end{proposition}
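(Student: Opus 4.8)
The plan is to show that $Gen^{\mathbb{Z}_n}(A)$ is a graded domain by exploiting the isomorphism $Gen^{\mathbb{Z}_n}(A)\cong \F/(\F\cap T_G(M_n(K)))$ together with the concrete realization of the generic algebra as the subalgebra of $M_n(K[Y])$ generated by the generic homogeneous matrices $\xi_g^r$. The key observation is that homogeneous elements of $Gen^{\mathbb{Z}_n}(A)$ are, by construction, matrices in $M_n(K[Y])$ supported on a single diagonal $\{E_{i,i+g}\mid i\}$ for a fixed $g\in\mathbb{Z}_n$; so the statement reduces to a computation about products of such ``monomial-diagonal'' matrices over the integral domain $K[Y]$.

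First I would make precise the structure of a homogeneous element of degree $g$: any element $P\in (Gen^{\mathbb{Z}_n}(A))_g$ can be written as $P=\sum_{i=1}^n p_i E_{i,i+g}$ with $p_i\in K[Y]$, and the product of $P\in (Gen^{\mathbb{Z}_n}(A))_g$ with $Q=\sum_{i=1}^n q_i E_{i,i+h}\in(Gen^{\mathbb{Z}_n}(A))_h$ is $PQ=\sum_{i=1}^n p_i q_{i+g}E_{i,i+g+h}$. Hence $PQ=0$ forces $p_iq_{i+g}=0$ in $K[Y]$ for every $i$, and since $K[Y]$ is an integral domain, for each $i$ either $p_i=0$ or $q_{i+g}=0$. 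The remaining task — and this is the real content — is to upgrade these coordinatewise vanishing statements to the conclusion that $P=0$ (i.e.\ all $p_i=0$) or $Q=0$ (i.e.\ all $q_j=0$).

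The mechanism for that upgrade is the action of conjugation by the generic diagonal matrix and, more importantly, by the cyclic matrix $N=\sum_{i=1}^n E_{i,i+1}$, which by Lemma~\ref{cone} (applied at the level of the generic algebra, or equivalently by noting $N\in(M_n(K[Y]))_1$ normalizes each homogeneous component) sends $(Gen^{\mathbb{Z}_n}(A))_g$ into itself and cyclically permutes the ``slots'' $i\mapsto i+1$. More concretely, I would argue that if $P\neq 0$ then some $p_{i_0}\neq 0$; using fresh generic variables (the families indexed by $r\in\mathbb{N}$ are independent) one can arrange an evaluation, or simply observe in $M_n(K[Y])$, that the $p_i$ arising from a genuinely nonzero generic product cannot vanish on any single slot without all of them vanishing, because the cyclic symmetry $N^{-1}\xi_g^r N = $ (generic matrix of degree $g$ with shifted entries) permutes the entries $p_1,\dots,p_n$ transitively. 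Thus $p_{i_0}\neq 0$ implies, after conjugation, that every slot of a conjugate of $P$ is nonzero; feeding this into $p_iq_{i+g}=0$ (applied to the conjugates, which also kills $PQ$) yields $q_j=0$ for all $j$, i.e.\ $Q=0$.

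The step I expect to be the main obstacle is precisely this transitivity/cyclic-symmetry argument: one must be careful that $Gen^{\mathbb{Z}_n}(A)$ is the algebra \emph{generated} by the $\xi_g^r$, not all of $M_n(K[Y])$, so a homogeneous element need not have all coordinates $p_i$ nonzero or even generic; the claim ``$p_{i_0}\neq 0\Rightarrow$ all $p_i\neq 0$'' holds not for arbitrary matrices but because $P$ lies in the image of a $T_G$-ideal quotient that is stable under the $N$-conjugation, and $N$ acts on the coordinate vector $(p_1,\dots,p_n)$ by a cyclic shift combined with a relabeling of the generic variables — so a nonzero coordinate in one slot becomes a nonzero coordinate in every slot of some conjugate. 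An alternative, and perhaps cleaner, route avoiding this subtlety is to lift the whole question: pick nonzero homogeneous $f,h\in\F$ with $fh\in T_G(M_n(K))$, and show directly that $f\in T_G(M_n(K))$ or $h\in T_G(M_n(K))$ by evaluating on matrix units as in Lemma~\ref{units} and using that the graded identities of $M_n(K)$ with the Vasilovsky grading are well understood (Vasilovsky \cite{Vasilovsky}); a non-identity homogeneous polynomial has a matrix-unit evaluation equal to a nonzero scalar multiple of some $E_{i,i+g}$, and multiplying such evaluations composably over $K[Y]$ (an integral domain) is nonzero, contradicting $fh\in T_G$. I would present the proof via this second route, as it sidesteps the delicate symmetry bookkeeping and reduces everything to the non-vanishing of products in the domain $M_n(K[Y])$.
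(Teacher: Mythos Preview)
The paper does not actually prove Proposition~\ref{domain}; it merely asserts it as well known. So there is no ``paper's own proof'' to compare against, and your proposal must be judged on its merits.

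Your overall strategy---write homogeneous elements as $P=\sum_i p_iE_{i,i+g}$ with $p_i\in K[Y]$, compute $PQ=\sum_i p_iq_{i+g}E_{i,i+g+h}$, and reduce to $K[Y]$ being an integral domain---is exactly right, and you correctly isolate the real content: showing that a nonzero homogeneous element of $Gen^{\mathbb{Z}_n}(A)$ has \emph{all} coordinates $p_i\neq 0$. However, the argument you give for this step does not work as written. Conjugating both $P$ and $Q$ by $N^k$ and reading off $(N^{-k}PN^k)(N^{-k}QN^k)=0$ simply reproduces the same equations $p_jq_{j+g}=0$ after a shift of index; no new constraint appears, so one cannot conclude $q_j=0$ for all $j$ this way. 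The correct mechanism is the one you allude to with ``relabeling of the generic variables,'' but it should be applied directly to $P$, not to a conjugate: the $K$-automorphism $\sigma$ of $K[Y]$ sending $y_{ij}^{(r)}\mapsto y_{i+1,j+1}^{(r)}$, applied entrywise and then followed by conjugation by $N$, fixes every generator $\xi_g^r$ and hence fixes $P$ itself. Unwinding this gives $p_{i+1}=\sigma(p_i)$ for all $i$, so $p_i=0\Leftrightarrow p_{i+1}=0$, and therefore either all $p_i$ vanish or none do. With this in hand, your product argument in $K[Y]$ finishes the proof immediately.

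Your alternative route via Lemma~\ref{units} and matrix-unit evaluations is weaker than you suggest. When $f$ and $h$ share variables you need a \emph{single} simultaneous evaluation making both values nonzero and composable; separate matrix-unit evaluations for $f$ and $h$ need not combine (the column index of $f$'s value must match the row index of $h$'s), and Lemma~\ref{units} as stated covers only $g\neq 0$. That route, too, ultimately reduces to the generic-matrix computation and the ``all $p_i\neq 0$'' fact above, so it does not genuinely sidestep the symmetry bookkeeping.
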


    \begin{lemma}\label{nonsingular}
	    Let $f(x_1,\dots,x_m)$ be a ${\mathbb{Z}_n}$-graded homogeneous polynomial of degree $g\neq 0$. If $f$ is not a  graded polynomial identity for $M_n(K)$, with the Vasilivsky grading, then there exists a non-singular matrix in the image of $f$.
    \end{lemma}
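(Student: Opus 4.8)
The goal is to show that a nonidentity homogeneous graded polynomial $f$ of degree $g\neq 0$ on $M_n(K)$ with the Vasilovsky grading attains a nonsingular value. The natural setup is to pass to the graded generic matrix algebra $Gen^{\mathbb{Z}_n}(A)$ and exploit that it is a graded domain (Proposition \ref{domain}). The plan is to evaluate $f$ on generic homogeneous matrices $\xi_{g_1}^{(1)},\dots,\xi_{g_m}^{(m)}$ where $g_i=\deg(x_i)$; since $f$ is not a graded identity, the resulting element $F:=f(\xi_{g_1}^{(1)},\dots,\xi_{g_m}^{(m)})\in Gen^{\mathbb{Z}_n}(A)$ is nonzero, and it is homogeneous of degree $g$, hence of the form $\sum_{i=1}^n p_i(Y)E_{i,i+g}$ with not all $p_i\in K[Y]$ equal to zero.

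First I would observe that because $g\neq 0$ is \emph{not necessarily invertible} in $\mathbb{Z}_n$, the matrix $\sum_i p_i E_{i,i+g}$ need not itself be invertible over the field of fractions for a single generic specialization; instead I would use the graded-domain property to control powers of $F$. Concretely, let $d$ be the order of $g$ in $\mathbb{Z}_n$; then $F^{n/d}$ lands in $A_0$, and more to the point, $F^{n/d}$ is (up to relabeling indices along the $g$-orbits) block-diagonal with each block a product of the relevant $p_i$'s. The key point is that $F^{n/d}\neq 0$: this is exactly where Proposition \ref{domain} is used, since $F\neq 0$ and $Gen^{\mathbb{Z}_n}(A)$ has no homogeneous zero divisors, so no power of the homogeneous element $F$ can vanish. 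Thus $F^{n/d}$ is a nonzero element of the generic algebra whose diagonal entries are certain nonzero products of the $p_i(Y)$; pulling these diagonal entries out, at least one cyclic product $\prod_{k} p_{i+kg}(Y)$ is a nonzero polynomial in $K[Y]$.

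Next I would argue that since $K$ is infinite (characteristic zero), a nonzero polynomial over $K$ has a nonvanishing specialization; choosing values of the $Y$-variables avoiding the (finite union of) hypersurfaces where any of these cyclic products vanish, I obtain an actual evaluation of $f$ on honest homogeneous matrices $B_1,\dots,B_m$ over $K$ with $f(B_1,\dots,B_m)=\sum_i \gamma_i E_{i,i+g}$ where every cyclic product $\prod_k \gamma_{i+kg}$ is nonzero — equivalently, every $\gamma_i\neq 0$. A matrix of the form $\sum_{i=1}^n \gamma_i E_{i,i+g}$ with all $\gamma_i\neq 0$ is a monomial (generalized permutation) matrix associated to a permutation all of whose cycles have the same length $d$, hence a product of disjoint $d$-cycles, which is an invertible permutation-type matrix scaled by nonzero entries; its determinant is $\pm\prod_i\gamma_i\neq 0$. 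Therefore $f$ has a nonsingular value.

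The main obstacle is the case $g$ not invertible in $\mathbb{Z}_n$: one cannot simply say "a generic element of $A_g$ is invertible," because $A_g$ contains no invertible elements at all when $\gcd(g,n)\neq 1$ in the sense that a single $E_{i,i+g}$-type matrix with some zero entries is singular — the genericity must be used carefully to force \emph{all} the entries $\gamma_i$ nonzero simultaneously, and this in turn relies on the domain property to guarantee each relevant polynomial $p_i$ (or at least each cyclic product of them) is not the zero polynomial. Handling the bookkeeping of which $p_i$ are forced nonzero is where one must be slightly careful; everything else is routine.
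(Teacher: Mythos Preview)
Your approach shares the key ingredient with the paper's---pass to the generic evaluation $F\in Gen^{\mathbb{Z}_n}(A)$ and use that this algebra is a graded domain (Proposition~\ref{domain})---but the paper argues by a short contrapositive rather than by explicit specialization: if every evaluation of $f$ were singular, then $f^n$ would be a graded identity (a singular element of $A_g$ with $g\neq 0$ is nilpotent of index at most $n$, at least when $\gcd(g,n)=1$ as in the prime-$n$ setting the paper actually needs), so $F^n=0$ in the graded domain $Gen^{\mathbb{Z}_n}(A)$, forcing $F=0$, a contradiction. No bookkeeping with individual entries is required.

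Your direct argument has a genuine gap. First a minor slip: if $d$ is the order of $g$ in $\mathbb{Z}_n$ then it is $F^{d}$, not $F^{n/d}$, that lies in $A_0$ (for $n=6$, $g=4$ one has $d=3$, while $F^{n/d}=F^{2}\in A_{2}$). More importantly, from $F^{d}\neq 0$ you correctly deduce that \emph{at least one} cyclic product $\prod_k p_{i+kg}(Y)$ is a nonzero polynomial, but you then choose a specialization ``avoiding the hypersurfaces where any of these cyclic products vanish'' and conclude that \emph{every} $\gamma_i$ is nonzero. If some other cyclic product is identically zero (equivalently some $p_j\equiv 0$), its vanishing locus is all of affine space and cannot be avoided; your specialization step then fails precisely in the case $\gcd(g,n)\neq 1$ that you yourself flag as the main obstacle. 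The missing observation is that conjugation by $N=\sum_i E_{i,i+1}$ acts on the generic matrices by a permutation of the $Y$-variables; comparing $N^{-1}FN$ computed as a cyclic shift of entries with $N^{-1}FN$ computed as $f$ evaluated on the relabeled generics shows that each $p_i$ is obtained from $p_{i-1}$ by a variable relabeling, so either all $p_i$ vanish or none do. With this in hand $\det F=\pm\prod_i p_i$ is a nonzero polynomial in $K[Y]$ and your specialization argument goes through; without it, the passage from ``one orbit is good'' to ``all entries are nonzero'' is unjustified.
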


    \begin{proof}
    	Assume that for any admissible evaluation we have a singular matrix. This implies that $f(x_1, \dots, x_m)^n$ is a graded identity. As a consequence, the image of $f$ via the canonical homomorphism onto the algebra of generic matrices is a nilpotent element. Since the algebra of $\mathbb{Z}_n$-graded generic matrices (with the Vasilovsky's grading) is a graded domain due to Proposition \ref{domain}, we obtain that $f$ is a graded identity. That is a contradiction.
    \end{proof}

    We now recall Lemma 1.34 of \cite{survey}, which will help us in proving the main result of this section.
    
    \begin{lemma}\label{dim2}
        Let $V_i$ (for $1\leq i\leq m$) and $V$ be linear spaces over an arbitrary field $K$. Let $f: \prod\limits_{i=1}^m V_i\rightarrow V$ be a multilinear map. Assume there exist two points in $Im(f)$ which are not proportional. Then $Im(f)$ contains a $2$-dimensional plane. In particular, if $V$ is $2$-dimensional, then $Im(f)=V$.
    \end{lemma}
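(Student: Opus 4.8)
The plan is to prove the slightly stronger assertion that $Im(f)$ contains a two-dimensional \emph{linear} subspace of $V$; the ``in particular'' clause then follows at once, since $Im(f)\subseteq V$. Fix two non-proportional points $u,w\in Im(f)$ and write $u=f(a_1,\dots,a_m)$ and $w=f(b_1,\dots,b_m)$. Call an $m$-tuple $(c_1,\dots,c_m)$ \emph{mixed} (relative to these evaluations) if $c_i\in\{a_i,b_i\}$ for every $i$ but $(c_1,\dots,c_m)$ is neither $(a_1,\dots,a_m)$ nor $(b_1,\dots,b_m)$.

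The key construction I would isolate first is this: \emph{if $f$ vanishes on every mixed tuple, then $\langle u,w\rangle\subseteq Im(f)$}. To prove it, set $T=\{i:a_i\neq b_i\}$ (non-empty, as $u\neq w$), fix some $k\in T$, and for $\alpha,\beta\in K$ evaluate $f$ at the tuple whose $k$-th entry is $\alpha a_k+\beta b_k$, whose entries with index in $T\setminus\{k\}$ are $a_i+b_i$, and whose remaining entries equal $a_i\,(=b_i)$. Expanding by multilinearity, every summand is a scalar multiple of $f$ applied to a tuple with entries in $\{a_i,b_i\}$; the summand from the all-$a$ choice is $\alpha u$, the one from the all-$b$ choice is $\beta w$, and each of the remaining summands is $f$ of a mixed tuple, hence $0$. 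So $f$ takes the value $\alpha u+\beta w$ at this tuple, and letting $(\alpha,\beta)$ range over $K^2$ gives $\langle u,w\rangle\subseteq Im(f)$. This uses only multilinearity and works over an arbitrary field, as required.

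To invoke the construction I would then choose, among all pairs of non-proportional points of $Im(f)$ equipped with evaluations realizing them, one for which the number $d$ of coordinates in which the two evaluations differ is minimal (necessarily $d\ge 1$). Call it $u,w$ with evaluations $\bar a,\bar b$. I claim $f$ vanishes on every mixed tuple for $\bar a,\bar b$: if $\bar c$ were mixed with $z:=f(\bar c)\neq 0$, then $z$ could not be proportional to both $u$ and $w$ (otherwise $u\parallel w$), so, say, $z\not\parallel u$; but then $u,z$ with evaluations $\bar a,\bar c$ is again such a pair, and $\bar a$ and $\bar c$ differ in fewer than $d$ coordinates (they agree outside $T$, disagree somewhere on $T$ because $\bar c\neq\bar a$, and agree somewhere on $T$ because $\bar c\neq\bar b$), contradicting minimality. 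Hence the construction applies, $\langle u,w\rangle\subseteq Im(f)$ is the desired plane, and when $\dim V=2$ this forces $Im(f)=V$.

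The part I expect to require the most care is the bookkeeping in the multilinear expansion: one must verify that out of the $2^{|T|}$ terms exactly two survive, namely $\alpha u$ and $\beta w$, which is where the vanishing-on-mixed-tuples hypothesis is consumed. The minimality device (equivalently, an induction on $d$, in which a nonzero mixed value not proportional to $u$ provides a pair with strictly smaller $d$) is the conceptual point that lets one assume that hypothesis without loss of generality; everything else is short.
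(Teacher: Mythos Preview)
The paper does not supply a proof of this lemma; it is quoted from \cite{survey} (there labeled Lemma~1.34) and used as a black box, so there is no in-paper argument to compare against.

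Your argument is correct. The minimality device on $d=|\{i:a_i\neq b_i\}|$ is the clean way to reduce to the situation where all mixed evaluations vanish, and the multilinear expansion then collapses to $\alpha u+\beta w$ exactly as you say. Two small points worth recording: first, in the base case $d=1$ there are no mixed tuples at all, so the ``vanishing on mixed tuples'' hypothesis is vacuous and the construction gives $\langle u,w\rangle\subseteq Im(f)$ immediately (this is really just linearity in the single coordinate where $\bar a,\bar b$ differ); second, the ``say, $z\nparallel u$'' step should be accompanied by the symmetric observation that if instead $z\nparallel w$ one uses the pair $(w,z)$ with evaluations $(\bar b,\bar c)$, and the same counting shows $|\{i:b_i\neq c_i\}|<d$. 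With those remarks the write-up is complete and works over an arbitrary field.
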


We are now in position to prove the main result of this section.

    \begin{thm}
        Let $f$ be a  multilinear graded polynomial. Then the image of $f$ on $M_2(K)$ is one of the following:
        \[\{0\}, \quad K, \quad (sl_2)_0, \quad A_0, \quad A_1.\]
    \end{thm}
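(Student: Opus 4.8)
The plan is to split the argument according to the $\mathbb{Z}_2$-degrees of the two possible homogeneous components in which a monomial of $f$ can live, and then to reduce a general multilinear graded polynomial to a normal form modulo the graded identities of $M_2(K)$. First I would recall that, for the Vasilovsky $\mathbb{Z}_2$-grading, $A_0$ is the space of diagonal matrices and $A_1$ the space of off-diagonal matrices, and that a multilinear graded polynomial $f(x_1,\dots,x_m)$ has a well-defined homogeneous $\mathbb{Z}_2$-degree $g=\deg(x_1)+\cdots+\deg(x_m)$; hence $\mathrm{Im}(f)\subseteq A_g$. So there are two cases: $g=1$ and $g=0$.

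For $g=1$, I would invoke the lemma already proved in Section~3: a homogeneous graded polynomial of nonzero degree which is not a graded identity has linear span of its image equal to $A_g$. Since $A_1$ is $2$-dimensional, Lemma~\ref{dim2} does the rest: either $\mathrm{Im}(f)=\{0\}$ (when $f\in T_G(M_2(K))$), or $\mathrm{Im}(f)$ contains two non-proportional points and therefore, being contained in the $2$-dimensional space $A_1$, equals $A_1$. (One must check that ``linear span is all of $A_1$'' forces the existence of two non-proportional image points, which is immediate.) This disposes of the possibilities $\{0\}$ and $A_1$ when $\deg f=1$.

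For $g=0$, I would first normalize: modulo $T_G(M_2(K))$, every multilinear graded monomial of degree $0$ in which some variable has degree $1$ can be rewritten so that the degree-$1$ variables are grouped; in fact using the known relations for the Vasilovsky grading of $M_2(K)$ one reduces $f$ to a combination of monomials of the shape $x_{i_1}\cdots x_{i_k}$ and $x_{i_1}\cdots x_{i_k}$ with prescribed parities, and one can bring $f$ to a normal form which is, modulo graded identities, a linear combination of very few monomials — essentially a ``commutator-like'' part of degree $1$ times another degree-$1$ variable, plus a diagonal part. Then I would split again: if $\mathrm{Im}(f)$ contains a diagonal matrix of nonzero trace, the circulant/linear-independence argument used in Proposition~\ref{tr0}'s neighborhood (here just a $2\times 2$ computation with conjugation by $N=E_{12}+E_{21}$) shows the span is all of $A_0$, and Lemma~\ref{dim2} upgrades this to $\mathrm{Im}(f)=A_0$; if $\mathrm{Im}(f)$ contains only traceless matrices but is not $\{0\}$, the same argument with the rank-$1$ circulant computation gives span $(sl_2)_0$, which is $1$-dimensional, so $\mathrm{Im}(f)=(sl_2)_0$; and the remaining subcase is that $\mathrm{Im}(f)$ contains only scalar matrices and is nonzero, giving $\mathrm{Im}(f)=K$, or $f$ is an identity and $\mathrm{Im}(f)=\{0\}$. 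It remains to rule out nothing — all five listed values genuinely occur (e.g.\ $x_1x_2-x_2x_1$ with both degrees $1$ gives $(sl_2)_0$, $x_1x_2+x_2x_1$ with both degrees $1$ gives $K$), which I would note via the examples already in the text.

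The main obstacle is the $g=0$ case: unlike $g\neq 0$, the image need not span a ``nice'' space a priori, and one cannot simply quote the earlier span lemma. The delicate point is showing that once $\mathrm{Im}(f)$ contains a diagonal matrix, conjugation by $N$ together with multilinearity actually produces two non-proportional diagonal matrices (so that Lemma~\ref{dim2} applies), and separately that the ``only scalars'' and ``only traceless'' alternatives are exhaustive once $f$ is not an identity and not central. Handling this cleanly requires either the normal-form reduction for the $\mathbb{Z}_2$-graded identities of $M_2(K)$ from \cite{div1} or a direct generic-matrix computation using Proposition~\ref{domain}; I expect the argument to hinge on the $2\times 2$ case being small enough that an explicit computation with $B=b_1E_{11}+b_2E_{22}$ and $C=c_1E_{12}+c_2E_{21}$ settles every subcase by hand.
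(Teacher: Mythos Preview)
Your overall strategy is correct and, in the degree-$1$ case, even simpler than the paper's. You observe that the linear span of $\mathrm{Im}(f)$ equals $A_1$ (the span lemma of Section~3), which immediately forces two non-proportional image points, and then Lemma~\ref{dim2} finishes. The paper instead exhibits $E_{12}\in\mathrm{Im}(f)$ via Lemmas~\ref{units} and~\ref{cone} and then invokes Lemma~\ref{nonsingular} --- the existence of a nonsingular matrix in the image, proved through the graded generic-matrix algebra being a graded domain --- to obtain a second, non-proportional element. Your route avoids that machinery entirely; the paper sets it up because it is reused in the semi-homogeneous section.

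In the degree-$0$ case your core trichotomy matches the paper's --- scalars only, traceless only, or else two independent diagonal elements and Lemma~\ref{dim2} --- but you surround it with unnecessary scaffolding. No normal form modulo $T_G(M_2(K))$ is needed, no circulant computation is needed, and no explicit generic-matrix evaluation is needed: since $K$ and $(sl_2)_0$ are each $1$-dimensional and the image of a multilinear map is closed under scalars, those two subcases are one line each. One small correction: in the remaining subcase you need a \emph{nonscalar} element of nonzero trace (not merely ``a diagonal matrix of nonzero trace''), since conjugating a scalar by $N$ returns the same scalar; the paper states this hypothesis explicitly. With that adjustment your argument and the paper's coincide, and the paper's version is simply the bare four-line trichotomy rather than the reduction you anticipate needing.
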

    
    \begin{proof}
        Let us assume $f$ is not a graded identity. If $f$ has homogeneous degree $g\neq 0$, Lemmas \ref{units} and \ref{cone} guarantee that $E_{1,2}\in Im(f)$. By Lemma \ref{nonsingular}, there exists a nonsingular matrix $B$ in $Im(f)$. The matrices $B$ and $E_{12}$ are clearly not proportional, then Lemma \ref{dim2} implies $Im(f)=A_1$.
        
        Assume now $f$ has homogeneous degree $0$. If all evaluations of $f$ are scalar matrices, we obtain  $Im(f)$ is $K$, the set of scalar matrices (since it is one-dimensional). If all evaluations of $f$ have trace zero, then $Im(f)$ is $(sl_2)_0$ because it is one dimensional.
        
        Finally, if $Im(f)$ contains a nonzero nonscalar element $B=aE_{11}+bE_{22}$ with nonzero trace, then conjugation by $N=E_{12}+E_{21}$ yields $bE_{11}+aE_{22}$, which still lies in $Im(f)$ by Lemma \ref{cone} and since $B$ is not scalar and has nonzero trace, $B$ and $N^{-1}BN$ are linearly independent. Again, Lemma \ref{dim2} completes the proof in this case and we are done.
    \end{proof}

\section{The image of semi-homogeneous graded polynomials on $2\times 2$ matrix algebras}
In this last section we address a similar question as above for semi-homogeneous polynomials. From now on any field is considered to be quadratically closed. 

We start the section with some concepts and results that will be used in the proof of the main result. We introduce now the so called \textit{$G$-graded prime} algebras. On this purpose, we address the reader to the paper \cite{balaba} by Balaba. 

We recall that an ideal of a graded algebra $A$ is a \emph{graded ideal} if it is generated by homogeneous elements. A graded ideal $P$ of $A$ is said \textit{$G$-graded prime} or \textit{$G$-prime} if it is prime as a graded ideal. Moreover, an element of $A$ is said \textit{regular} if it is not a zero divisor.

\begin{definition} A $G$-graded ideal $P$ of a $G$-graded algebra $A$ is called \textit{strongly graded prime} or $G$-\textit{strongly prime} if whenever $aAb\subseteq P,$ where $a,b\in h(A),$ either $a\in P$ or $b\in P$. Moreover, a graded algebra $A$ is called \textit{graded prime} if $(0)$ is a strongly graded prime ideal of $A$.\end{definition}

For example, every prime algebra $A$ graded by a group $G$ is $G$-prime. For suppose conversely that there exists $a,b\in h(A)$ which are not 0 and such that $aAb=0$, then $A$ is not prime.

\begin{definition}
A $G$-prime algebra $A$ is called \textit{PI $G$-prime} if it satisfies an ordinary polynomial identity.
\end{definition}

Recall that by \cite[Proposition 1]{balaba}, the localization $A_S$ of $A$ over $S$, where $S$ is a set of homogeneous regular elements of the center $Z(A)$ of $A$ is a PI $G$-graded algebra of central quotients of $A$. An algebra $Q(A)\supseteq A$ is called the \textit{left (right) graded algebra of quotients} of $A$ if:
\begin{enumerate}
\item each homogeneous regular element from $A$ is invertible in $Q(A);$
\item each homogeneous element $x\in Q(A)$ has the form $a^{-1}b$ ($ba^{-1}$), where $a,b\in h(A)$ and $a$ is regular.
\end{enumerate}

We have the following results (see \cite{balaba}). We recall if $A$ is $G$-graded, then we denote by $Z_{gr}(A)$ its \textit{graded center}, i.e., the largest $G$-graded subalgebra of $Z(A)$.

\begin{thm}[\cite{balaba} Proposition 1]\label{pregradedposner} Let $A$ be a PI $G$-prime algebra, $Z(A)$ the center of $A$ and $S$ the set of homogeneous regular elements of $Z(A)$. Then:
\begin{enumerate}
\item $S=h(Z(A));$
\item the algebra of quotients $A_S$ is a PI $G$-prime algebra;
\item $Z_{gr}(A_S)=Z_{gr}(A)_S.$
\end{enumerate}\end{thm}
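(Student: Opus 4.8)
The statement is the graded analogue of the classical description of the central localization of a prime PI ring, so the plan is to verify the three items in sequence, the first being the only one with genuine content and the other two formal consequences of Ore-localization theory.

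\textbf{Item (1).} I would show that every nonzero homogeneous central element of $A$ is regular, and argue by contradiction. Let $0\neq z\in h(Z(A))$, say $\deg z=e$, and suppose $za=0$ for some $a\in A$ (the left-zero-divisor case being symmetric since $z$ is central). Writing $a=\sum_g a_g$ with $a_g\in A_g$, the summands $za_g\in A_{eg}$ lie in pairwise distinct homogeneous components, because $g\mapsto eg$ is a bijection of $G$; hence $za_g=0$ for every $g$. Choosing $g$ with $a_g\neq 0$ and using centrality of $z$ gives $zAa_g=0$ with $z\neq 0$ and $a_g\neq 0$ homogeneous, contradicting $G$-primeness of $A$. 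Thus $h(Z(A))$ consists of regular elements, and since $S$ is by definition exactly the set of homogeneous regular elements of $Z(A)$, we conclude $S=h(Z(A))$.

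\textbf{Item (2).} Since $S$ is multiplicatively closed and central, I would form the Ore localization $A_S=S^{-1}A$; the embedding $A\hookrightarrow A_S$ is legitimate because the elements of $S$ are regular, and $A_S$ inherits a $G$-grading in which $s^{-1}a$ is homogeneous of degree $\deg(s)^{-1}\deg a$ for homogeneous $a$. As $A$ satisfies an ordinary, hence (in characteristic $0$) a multilinear, polynomial identity $f$, and every element of $A_S$ has the form $as^{-1}$ with $s$ central, clearing denominators shows that $f$ is an identity of $A_S$, so $A_S$ is PI. For $G$-primeness, take nonzero homogeneous $\alpha=s^{-1}a$, $\beta=t^{-1}b$ with $a,b$ homogeneous (hence nonzero); if $\alpha\,A_S\,\beta=0$, then specializing the middle factor to elements of $A$ and cancelling the invertible central factor $(st)^{-1}$ gives $aAb=0$, forcing $a=0$ or $b=0$ by $G$-primeness of $A$, a contradiction. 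This also recovers the fact recalled above from \cite{balaba} that $A_S$ is a PI $G$-graded algebra of central quotients.

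\textbf{Item (3).} I would first note $S=h(Z(A))\subseteq Z_{gr}(A)$, since $Z_{gr}(A)=\bigoplus_g\bigl(Z(A)\cap A_g\bigr)$ already contains every homogeneous central element, so that $Z_{gr}(A)_S:=S^{-1}Z_{gr}(A)$ makes sense as a graded subalgebra of $A_S$. The inclusion $Z_{gr}(A)_S\subseteq Z_{gr}(A_S)$ is immediate: for $z\in Z_{gr}(A)$ and $s\in S$, the element $s^{-1}z$ is homogeneous and commutes with every $t^{-1}b$ since $z$ commutes with $b$ in $A$ and $s,t$ are central. Conversely, any homogeneous $\zeta\in Z_{gr}(A_S)$ can be written $\zeta=s^{-1}c$ with $c$ homogeneous (the off-degree homogeneous components of a numerator must vanish); commuting $\zeta$ with an arbitrary $a\in A$ and cancelling the central invertible $s^{-1}$ yields $ca=ac$, so $c\in Z(A)$ is homogeneous, i.e. $c\in h(Z(A))=S\subseteq Z_{gr}(A)$ by item (1), whence $\zeta\in Z_{gr}(A)_S$. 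The main obstacle, such as it is, sits entirely in item (1): the element annihilated by $z$ need not be homogeneous, so one must isolate a single nonzero homogeneous component before $G$-primeness can be invoked; the remaining care is purely notational, namely tracking degrees through the localization and observing that, because $S\subseteq Z(A)$, handling one-sided zero divisors suffices.
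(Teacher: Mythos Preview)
The paper does not prove this statement: it is quoted verbatim as Proposition~1 of Balaba's paper \cite{balaba} and used as a black box, so there is no in-paper argument to compare your proposal against.

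That said, your sketch is essentially the standard argument and is correct in outline. One cosmetic point: in item~(1) you conclude $S=h(Z(A))$, but strictly speaking $0\in h(Z(A))$ is never regular, so the intended equality is $S=h(Z(A))\setminus\{0\}$; this is presumably what Balaba (and the paper) mean. In item~(3), your ``conversely'' step deduces from $c$ homogeneous and central that $c\in S\subseteq Z_{gr}(A)$; note that $c$ could be $0$, in which case $c\notin S$ but of course $\zeta=0\in Z_{gr}(A)_S$ trivially, so the conclusion still holds. These are bookkeeping issues rather than gaps.
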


\begin{thm}[\cite{balaba} Theorem 5]\label{gradedposner} Let $A$ be a PI $G$-prime algebra and $A_c$ the algebra of central quotients of $A$. Then:
\begin{enumerate}
\item $A_c$ is finite dimensional graded-simple over its graded center $Z$ and $Z$ is the graded field of quotients of $Z_{gr}(A);$
\item $A_c$ is the graded algebra of quotients of $A$;
\item $A$ and $A_c$ satisfy the same identities.
\end{enumerate}\end{thm}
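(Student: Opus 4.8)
Theorem \ref{gradedposner} is the $G$-graded counterpart of the classical Posner--Rowen theorem, and the plan is to run the ungraded argument inside the category of $G$-graded algebras, leaning on the localization facts already recorded in Theorem \ref{pregradedposner}. Write $S=h(Z(A))$ for the multiplicatively closed set of homogeneous regular central elements (which is all of $h(Z(A))$ by Theorem \ref{pregradedposner}(1)) and set $A_c:=A_S$. The first point is that $G$-primeness forces $Z_{gr}(A)$ to be a graded domain: if $a,b\in h(Z_{gr}(A))$ are nonzero with $ab=0$, then $aAb=(ab)A=0$ since $a$ is central, contradicting $G$-primeness. Hence every nonzero homogeneous element of $Z_{gr}(A)$ becomes invertible in $Z_{gr}(A)_S$, so $Z:=Z_{gr}(A)_S$ is a graded field; by Theorem \ref{pregradedposner}(3) it equals $Z_{gr}(A_c)$, which already gives the part of item (1) identifying $Z$ with the graded field of quotients of $Z_{gr}(A)$.

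The core of item (1) is finite dimensionality of $A_c$ over $Z$. Since $A$ satisfies an ordinary polynomial identity, so does $A_c$ --- for multilinear $f$ one has $f(s_1^{-1}b_1,\dots,s_m^{-1}b_m)=(s_1\cdots s_m)^{-1}f(b_1,\dots,b_m)$ with the $s_i$ central and the $b_i\in A$ --- and $A_c$ is again $G$-prime by Theorem \ref{pregradedposner}(2). At this point one needs a graded analogue of Kaplansky's theorem: a $G$-prime PI algebra whose graded center is a graded field is module-finite over it. When $G$ is finite this can be deduced by applying the ordinary Posner theorem to the smash product $A_c\#(KG)^*$, which is a prime PI algebra, and pulling the resulting ring of quotients back through Cohen--Montgomery duality; in general one argues directly from the structure theory of graded-simple PI algebras. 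Granting this, $A_c$ is module-finite over the graded field $Z$, hence graded-Artinian, so its graded Jacobson radical is nilpotent; the last nonzero power $I$ of that radical satisfies $I^2=0$, whence $aAa\subseteq I^2=0$ for every homogeneous $a\in I$, which by $G$-primeness forces $I=0$. Thus the radical vanishes and $A_c$, being $G$-prime and graded-Artinian with zero graded radical, is graded-simple over $Z$, finishing item (1).

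Items (2) and (3) should then be routine. For (2): each homogeneous element of $A_c=A_S$ has, by construction, the form $s^{-1}b$ (and symmetrically $bs^{-1}$) with $s\in S\subseteq h(A)$ regular and $b\in h(A)$, which is the second defining property of a graded ring of quotients; and any homogeneous regular $a\in h(A)$ stays a non-zero-divisor in the module-finite $Z$-algebra $A_c$, hence is invertible there, since over a graded field a module-finite algebra obeys the usual dichotomy between homogeneous zero-divisors and units. For (3): identities of $A_c$ are identities of $A$ because $A\subseteq A_c$, and conversely the computation above shows that every multilinear identity of $A$ holds in $A_c$; since the ground field has characteristic zero, each identity is equivalent to its multilinearizations, so $A$ and $A_c$ satisfy exactly the same polynomial identities.

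The genuine obstacle is the finite dimensionality in item (1). Unlike in the ungraded Posner theorem, $G$-primeness does not imply ordinary primeness, and the full center $Z(A_c)$ need not be a field --- only the graded center $Z$ is a graded field --- so Kaplansky's theorem cannot be invoked verbatim; one must either carry out the smash-product reduction (available for finite $G$) or develop the graded PI machinery, including graded central polynomials to guarantee $Z_{gr}(A)\neq 0$ and a graded analogue of Kaplansky's structure theorem. These are exactly the technical points worked out in \cite{balaba}, which we take as given.
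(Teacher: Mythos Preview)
The paper does not prove this theorem at all: it is quoted verbatim as Theorem~5 of \cite{balaba} and stated without proof, so there is nothing in the paper to compare your argument against. Your sketch is a reasonable outline of the graded Posner--Rowen strategy and you correctly flag that the substantive step (a graded Kaplansky-type theorem yielding finite dimensionality over the graded center) is precisely what is deferred to \cite{balaba}; since the paper itself defers the entire result to that reference, your proposal is consistent with how the paper treats the statement.
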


It is easy to see that a $G$-graded domain is a $G$-prime algebra, then by Theorems \ref{pregradedposner} and \ref{gradedposner} we have any PI graded domain can be embedded in a graded prime algebra of central quotients.

It is not difficult to see that $Gen^{\mathbb{Z}_n}(A)$ is a PI ${\mathbb{Z}_n}$-prime algebra, then it admits a ${\mathbb{Z}_n}$-graded algebra of central quotients $Q(A)$. 

We have the following.

\begin{proposition}\label{proposition2}
Let $A=M_n(K)$ be endowed with the $\Z_n$-grading of Vasilovsky, then $Q(A)$ is a $\Z_n$-graded division algebra.
\end{proposition}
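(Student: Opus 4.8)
The plan is to locate, inside each homogeneous component $Q(A)_g$, an explicit invertible element, and separately to prove that $Q(A)_0$ is a field; combining these two facts immediately forces every nonzero homogeneous element of $Q(A)$ to be a unit. To begin, I would record the structural input: since $Gen^{\Z_n}(A)$ is a PI $\Z_n$-prime algebra, Theorem \ref{gradedposner} tells us that $Q(A)=Gen^{\Z_n}(A)_c$ is finite-dimensional and graded-simple over its graded center $Z$, and that $Z$ is a graded field. Because the grading group $\Z_n$ is finite, a graded field $Z$ is finite-dimensional over $Z_0$ (its support is a subgroup of $\Z_n$ and each nonzero component is a rank-one $Z_0$-module), so $Q(A)$ is in fact finite-dimensional over the field $Z_0$, and in particular every homogeneous component of $Q(A)$ is a finite-dimensional $Z_0$-space.

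Next I would produce the units. The key computation is with the generic degree-$\overline 1$ matrix $u:=\xi_1^1=\sum_{i=1}^n y^{(1)}_{i,i+1}E_{i,i+1}\in Gen^{\Z_n}(A)$: expanding powers of a weighted cyclic permutation matrix gives $u^{n}=\bigl(\prod_{i=1}^n y^{(1)}_{i,i+1}\bigr)\,I_n=:\lambda I_n$, a nonzero scalar matrix. As a scalar matrix it is central in $Gen^{\Z_n}(A)$, and it is regular because $Gen^{\Z_n}(A)$ is a graded domain (Proposition \ref{domain}); hence $\lambda I_n$ lies in $S=h(Z(Gen^{\Z_n}(A)))$ and becomes invertible in $Q(A)$. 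Since $u$ commutes with $u^n=\lambda I_n$, it follows that $u$ is invertible in $Q(A)$, with inverse $(\lambda I_n)^{-1}u^{\,n-1}$. Consequently, for each $g\in\Z_n$, writing $g=\overline k$ with $0\le k<n$, the element $v_g:=u^{k}\in Q(A)_g$ is a unit. Multiplying by $v_g^{-1}$ shows $Q(A)_g=Q(A)_0\,v_g$ for every $g$; thus each component is a free (left and right) $Q(A)_0$-module of rank one.

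It then remains to check that $Q(A)_0$ is a field. Expanding a monomial in the generators, $\xi^{r_1}_{g_1}\cdots\xi^{r_k}_{g_k}=\sum_{i=1}^n (\text{monomial in the }y\text{'s})\,E_{i,\,i+g_1+\cdots+g_k}$, so every degree-zero monomial is a diagonal matrix; hence $Gen^{\Z_n}(A)_0$ is commutative, and it is a domain because $Gen^{\Z_n}(A)$ is a graded domain. Moreover any homogeneous degree-zero element of $Q(A)=Gen^{\Z_n}(A)_S$ can be written $s^{-1}a$ with $s\in Z(Gen^{\Z_n}(A))$ and $a\in Gen^{\Z_n}(A)$ of a common degree $g$, and using $s^{-1}=(s^n)^{-1}s^{\,n-1}$ with $s^n\in Z(Gen^{\Z_n}(A))_0\subseteq Gen^{\Z_n}(A)_0$ regular and $s^{\,n-1}a\in Gen^{\Z_n}(A)_0$, we see that $Q(A)_0$ embeds into the field of fractions of the commutative domain $Gen^{\Z_n}(A)_0$. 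Thus $Q(A)_0$ is a commutative domain which is finite-dimensional over the field $Z_0$, hence a field. Finally, a nonzero $q\in Q(A)_g$ equals $c\,v_g$ with $0\ne c\in Q(A)_0$; as both $c$ and $v_g$ are units, so is $q$. Therefore every nonzero homogeneous element of $Q(A)$ is invertible, i.e. $Q(A)$ is a $\Z_n$-graded division algebra.

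The hard part is not conceptual but a matter of care in the third step: pinning down exactly the form of homogeneous elements of the central localization $Gen^{\Z_n}(A)_S$ and verifying that $Q(A)_0$ stays inside $\mathrm{Frac}\bigl(Gen^{\Z_n}(A)_0\bigr)$, which is what makes $Q(A)_0$ a domain and hence, being finite-dimensional over $Z_0$, a field. An alternative to this would be to invoke a graded Wedderburn decomposition $Q(A)\cong M_t(D)$ with $D$ a graded division algebra and an elementary grading on $M_t$, and then rule out $t\ge 2$ using that $Q(A)_0$ is commutative; but the explicit power $u^n=\lambda I_n$ already does the job directly and keeps the argument elementary.
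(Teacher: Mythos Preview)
Your argument is correct, but it takes a considerably more elaborate route than the paper. The paper's proof is essentially a one-liner: any nonzero homogeneous element of $Gen^{\Z_n}(A)$ is automatically regular (this is immediate from Proposition~\ref{domain}: if $a\in h(Gen^{\Z_n}(A))$ is nonzero and $ab=0$, decompose $b=\sum_g b_g$ to get $ab_g=0$ for each $g$, whence $b=0$), and by Theorem~\ref{gradedposner}(2) the algebra $Q(A)$ is the graded algebra of quotients of $Gen^{\Z_n}(A)$, so by definition every regular homogeneous element of the latter is invertible in $Q(A)$. Since any nonzero homogeneous element of $Q(A)$ has the form $a^{-1}b$ with $a,b\in h(Gen^{\Z_n}(A))$, $a$ regular and $b\neq 0$, and both factors are now units, the conclusion follows. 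In other words, the paper is really just invoking the general fact that the graded algebra of quotients of any PI graded domain is a graded division algebra; nothing specific to the Vasilovsky grading is used beyond Proposition~\ref{domain}.

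Your approach, by contrast, is constructive: you produce an explicit homogeneous unit $u=\xi_1^{1}$ via the identity $u^{n}=\bigl(\prod_i y^{(1)}_{i,i+1}\bigr)I_n$, use it to reduce the problem to showing that $Q(A)_0$ is a field, and then verify this by combining the commutativity of the degree-zero part (diagonal matrices) with finite-dimensionality over $Z_0$. This has the merit of displaying concretely what the inverses look like, but the auxiliary machinery (the graded-field structure of $Z$, the embedding of $Q(A)_0$ into $\mathrm{Frac}\bigl(Gen^{\Z_n}(A)_0\bigr)$, the finite-dimensionality argument) is unnecessary once one realizes that Theorem~\ref{gradedposner}(2) already inverts \emph{all} regular homogeneous elements, not merely the central ones. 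The paper's route is therefore both shorter and more general.
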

\proof
Notice that a homogeneous element in $Q(A)$ has the form $\sum_{g_i^{-1}g_j=g}p_{ij}^{(r)}E_{ij}$, where $p_{ij}$ is a non-zero polynomial in  $K[Y]$, then it is regular, so it invertible in $Q(A)$. 
\endproof

The next will be used later on. See the book \cite{rowenkanelbelov} for more details. 

\begin{definition}
We shall call \textit{expediting $G$-graded algebra}, the algebra generated by the graded generic algebras and the traces of their elements of degree $1$.
\end{definition}

In the light of Proposition \ref{proposition2} we have the next result.

\begin{proposition}\label{proposition1}
The expediting graded algebra endowed with the $\Z_n$-grading of Vasilovsky is a $\Z_n$-graded domain which can be embedded in the $\Z_n$-graded division algebra of central quotients of the graded algebra of generic matrices.
\end{proposition}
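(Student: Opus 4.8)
The plan is to establish two things: that the expediting graded algebra is a $\Z_n$-graded domain, and that it admits the required embedding. For the first part, I would argue that the expediting algebra sits between $Gen^{\Z_n}(A)$ and its graded algebra of central quotients $Q(A)$. Indeed, by definition the expediting algebra is generated by the generic matrices $\xi_g^r$ together with the traces of the degree-$1$ generic matrices; but each such trace is a central element of $Gen^{\Z_n}(A)$ (traces of matrices commute with everything that is a polynomial in matrices, and they are fixed by the action that defines the grading, so they lie in $Z_{gr}(Gen^{\Z_n}(A))$, hence in the central localization). Thus the expediting algebra is a subalgebra of $Q(A)$. Since $Q(A)$ is a $\Z_n$-graded division algebra by Proposition~\ref{proposition2}, in particular it is a $\Z_n$-graded domain, and any graded subalgebra of a graded domain is again a graded domain. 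This simultaneously yields the embedding into the $\Z_n$-graded division algebra of central quotients, because $Q(A)$ \emph{is} that algebra.

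More carefully, the key steps in order are: (i) recall from the discussion preceding Proposition~\ref{proposition2} that $Gen^{\Z_n}(A)$ is PI $\Z_n$-prime and therefore embeds, via Theorems~\ref{pregradedposner} and~\ref{gradedposner}, in its graded algebra of central quotients $Q(A)$, which by Theorem~\ref{gradedposner}(1) is finite-dimensional graded-simple over its graded center; (ii) observe that the trace of a degree-$1$ generic matrix $\xi_1^r$ is a homogeneous (in fact degree-zero, since the trace is a sum of diagonal entries) element of the center of $Gen^{\Z_n}(A)$, hence a homogeneous regular element of $Z(Gen^{\Z_n}(A))$, hence invertible in $Q(A)$ and in particular an element of $Q(A)$; (iii) conclude that the subalgebra generated inside $Q(A)$ by the $\xi_g^r$ and these traces — which is precisely the expediting graded algebra — is a graded subalgebra of $Q(A)$; (iv) invoke Proposition~\ref{proposition2} to say $Q(A)$ is a $\Z_n$-graded division algebra, so in particular a graded domain, and note that a graded subalgebra of a graded domain is a graded domain. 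The embedding claimed in the statement is then the inclusion into $Q(A)$ itself.

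The main obstacle I anticipate is not any deep computation but rather the bookkeeping needed to justify step (ii) cleanly: one must check that the traces in question genuinely land in the \emph{graded} center of $Gen^{\Z_n}(A)$ (so that the localization machinery of Balaba's theorems applies to them) and that adjoining them to $Gen^{\Z_n}(A)$ inside $Q(A)$ does not take us outside $Q(A)$ — but this is immediate once one notes that homogeneous regular central elements are exactly the elements inverted in passing to $Q(A)$, so both they and their inverses already live in $Q(A)$. A secondary point of care is to make sure the notion of ``graded domain'' is the right one to pass to subalgebras: since the defining condition $ab=0 \Rightarrow a=0$ or $b=0$ is required only for homogeneous $a,b$, and homogeneity is inherited by a graded subalgebra, the inheritance is automatic. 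With these observations in place the proposition follows directly from Propositions~\ref{domain} and~\ref{proposition2} and the preceding embedding results, with essentially no calculation required.
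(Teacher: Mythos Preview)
Your overall architecture---embed the expediting algebra in $Q(A)$, then inherit the graded-domain property from Proposition~\ref{proposition2}---is exactly the paper's strategy. But step~(ii) contains a genuine error that breaks the argument.

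You claim that the trace of a generic matrix is an element of $Z_{gr}\bigl(Gen^{\Z_n}(A)\bigr)$. It is not, because in general it is not an element of $Gen^{\Z_n}(A)$ at all. The algebra $Gen^{\Z_n}(A)$ consists of noncommutative polynomials in the $\xi_g^r$; the trace $tr(\xi_0^r)=\sum_i y_{ii}^{(r)}$, viewed as the scalar matrix $\bigl(\sum_i y_{ii}^{(r)}\bigr)I$, does not lie there. (Already for $n=2$: the subalgebra generated by a single $\xi_0^r$ consists of diagonal matrices $\mathrm{diag}\bigl(p(y_{11}^{(r)}),p(y_{22}^{(r)})\bigr)$ for $p\in K[t]$, and no graded central polynomial is linear in a degree-zero variable.) The sentence ``traces commute with everything, hence lie in the center'' conflates two different things: yes, the scalar matrix $tr(\xi)I$ is central in the ambient $M_n(K[Y])$, but membership in $Z_{gr}\bigl(Gen^{\Z_n}(A)\bigr)$ requires first being inside $Gen^{\Z_n}(A)$. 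That is precisely the nontrivial point, and it is the whole reason one must \emph{adjoin} traces to form the expediting algebra rather than already having them.

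The paper closes this gap by invoking the graded analogue of Theorem~J from \cite{rowenkanelbelov}: each trace function can be written as a ratio $p/q$ of two elements taking values in the graded center of $Q(A)$. Since $Q(A)$ is obtained by inverting homogeneous central elements, both $p$ and $q^{-1}$ live in $Q(A)$, so the trace does too. This is a substantive PI-theoretic input (the existence of suitable central polynomials realizing trace), not mere bookkeeping. Once you replace your step~(ii) with this, your steps~(iii) and~(iv) go through verbatim and coincide with the paper's conclusion.
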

\proof
By the graded analogue of Theorem J of \cite{rowenkanelbelov} we have the trace functions of even elements of the graded generic matrix algebra $A$ can be written as the ratio of two polynomials taking values on the graded center of $Q(A)$. Hence any trace of degree 1 elements belongs to $Q(A)$ and we are done.
\endproof

We shall recall some basic topological results that will be useful further in the text. A topological space will be denoted by a pair $(X,\tau_X)$, where $X$ is the underlying space and $\tau_X$ is a topology on $X$, i.e., the set of its open sets. Of course, any closed set of a given topology is the complementary set of an open set and viceversa.
    
    We consider a finite dimensional vector space $V$ over a field $K$. Indeed, $V\cong K^n$ as a vector space, where $n$ equals the dimension of $V$ over $K$. Consider now the algebra of commutative polynomials in $n$ variables $K[X_n]:=K[x_1,\ldots,x_n]$ and choose a set $S\subseteq K[X_n]$. We define \[V(S):=\{\textit{$\overline{a}=(a_1,\ldots,a_n)\in K^n|f(a_1,\ldots,a_n)=0$ for every $f\in S$\}}.\] The set of all $V(S)$, $S\subseteq K[X_n]$ is the set of closed sets of a topology on $V$ called \textit{Zariski's topology}. Recall any finite set of $V$ is a closed set in the Zariski's topology of $V$. Furthermore, the finite sets are the sole closed sets if $V\cong K$.
    
    Given two topological spaces $(X,\tau_X)$, $(Y,\tau_Y)$ and a function $f:X\rightarrow Y$, $f$ is said to be \textit{continuous} if $f^{-1}(U)\in \tau_X$ for every $U\in\tau_Y$ whereas $f$ is said to be \textit{open} if $f(W)\in\tau_Y$ for every $W\in\tau_X$. Furthermore, in a topological space $(X,\tau_X)$ a set $S$ is said \textit{dense} if $\overline{S}=X$, where $\overline{S}$ denotes the \textit{closure} of $S$, i.e., the least closed set containing $S$. Notice that any open set in the Zariski's topology is dense as well as any set containing a dense set in any topological space. We also have the next result.
    
    \begin{lemma}\label{reverse}
        Let $(X,\tau_X)$, $(Y,\tau_Y)$ be topological spaces and $f:X\rightarrow Y$ a continuous and open function. Then for every dense set $S$ of $(Y,\tau_Y)$ we have $f^{-1}(S)$ is dense in $(X,\tau_X)$. 
    \end{lemma}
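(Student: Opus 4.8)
The plan is to reduce everything to the standard characterization of density: a subset $T$ of a topological space $(Z,\tau_Z)$ is dense if and only if it meets every nonempty open set of $\tau_Z$. So to show that $f^{-1}(S)$ is dense in $(X,\tau_X)$, it suffices to fix an arbitrary nonempty $W\in\tau_X$ and prove $W\cap f^{-1}(S)\neq\emptyset$.

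First I would push $W$ forward along $f$. Since $f$ is open and $W\in\tau_X$, the set $f(W)$ lies in $\tau_Y$, and it is nonempty because $W$ is. Since $S$ is dense in $(Y,\tau_Y)$, the open set $f(W)$ meets $S$, so we may choose a point $y\in f(W)\cap S$. Writing $y=f(x)$ with $x\in W$, the condition $y\in S$ is precisely the statement $x\in f^{-1}(S)$, hence $x\in W\cap f^{-1}(S)$ and this intersection is nonempty. As $W$ was an arbitrary nonempty open subset of $X$, we conclude that $f^{-1}(S)$ meets every nonempty open set of $(X,\tau_X)$, i.e.\ $\overline{f^{-1}(S)}=X$.

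There is essentially no obstacle here; the only point requiring a word of care is the degenerate case in which $X$ (or some candidate open set) is empty, and this is handled automatically by quantifying over \emph{nonempty} open sets. It is worth remarking that continuity of $f$ is in fact not needed for this implication—openness alone suffices—but we retain both hypotheses since in our intended applications $f$ is a polynomial map between affine spaces equipped with the Zariski topology, and is therefore continuous for free.
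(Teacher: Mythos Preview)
Your proof is correct and is the standard argument; the paper itself states this lemma without proof, treating it as a basic topological fact. Your observation that continuity is superfluous (openness alone carries the argument) is also correct.
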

 
 Now we come back to images of graded polynomials.  Let us set $N$ the set of non-nilpotent matrices of size two and let $M\in N$. Then at least one of the eigenvalues of $M$, $\lambda_1,\lambda_2$, is non-zero. Hence we are allowed to consider the \textit{ratio} of the eigenvalues of $M$ that is $0$ if one between $\lambda_1$ and $\lambda_2$ is 0 whereas it is well defined as $\lambda_1/\lambda_2$ (up to taking reciprocals). We say two non-nilpotent matrices \textit{have different ratios of their eigenvalues} if their ratios of eigenvalues are not equal nor reciprocal. Then we define a map $\Pi:N\rightarrow K$ such that \[\Pi(M)=\left\{\begin{array}{ll}
    0   & \textit{if 0 is an eigenvalue of } M \\
     \lambda_1/\lambda_2+\lambda_2/\lambda_1  & \textit{otherwise}.
 \end{array}\right.\]
 Notice that if $0$ is not an eigenvalue of $M$, then \begin{equation}\label{eigen}\lambda_1/\lambda_2+\lambda_2/\lambda_1=-2+tr(M)^2/det(M).\end{equation}
 
We recall that a polynomial $f=f(x_1,\ldots,x_n)$, is said to be \textit{semi-homogeneous} with nonzero weighted degree $d$ if, letting $d_i$ be the usual degree of $x_i$ in $f$, there exist weights $w_1$, $w_2$, $\dots$, $w_n$ such that in each summand of $f$ we have $w_1d_1 + w_2d_2 +\cdots+ w_nd_n = d$. As pointed out in \cite[Lemma 2]{K-BMR}, if $K$ is closed under $d$-roots, then the image of a semi-homogeneous polynomial is invariant under scalar multiplication.

The next result, the main result of the section, give a description of images of semi-homogeneous $\mathbb{Z}_2$-graded polynomials on $M_2(K)$. Our approach is strongly based to that of \cite{K-BMR}. The difference here is that we need to split the proof in two cases depending on the homogeneous degree of $f$.
 
\begin{thm}
Let $K$ be a field closed under $d$-roots and let $f\in K\langle X \rangle$ be a semi-homogeneous $\Z_2$-graded polynomial of weighted degree $d$. Then, the image of $f$ evaluated on the algebra $M_2(K)$ of $2\times2$ matrices over the  field $K$ is one of the following: \[\{0\},\ \ K,\ \  sl_2(K)_0,\ \ M_2(K)_0,\ \ M_2(K)_1,\ \ D_0,\ \  D_1,\] where $D_i$ is a dense set in the Zariski's topology defined on $M_2(K)_i$.

\end{thm}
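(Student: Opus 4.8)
The plan is to use that a semi-homogeneous polynomial is multihomogeneous, hence $\Z_2$-homogeneous of some degree $g\in\{\overline 0,\overline 1\}$: the whole argument splits along this dichotomy. Since the weighted degree $d$ is nonzero, $f$ has no constant term, so $0\in Im(f)$; since $K$ is closed under $d$-th roots, $Im(f)$ is invariant under multiplication by scalars of $K$ (as recalled just before the statement, following \cite{K-BMR}), and by Lemma \ref{cone} it is invariant under conjugation by homogeneous invertible matrices — in $M_2(K)$ these are the invertible diagonal matrices and the invertible elements of $A_1$, in particular $N=E_{1,2}+E_{2,1}$. If $f$ is a graded identity then $Im(f)=\{0\}$; assume it is not. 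Suppose first $g=\overline 1$, so $Im(f)\subseteq A_1$, a $2$-dimensional space whose nilpotent elements are exactly those with a zero entry. By Lemma \ref{nonsingular} there is a non-singular (hence non-nilpotent) $M_0=aE_{1,2}+bE_{2,1}\in Im(f)$ with $ab\neq 0$. Conjugation by diagonal matrices scales the two entries by reciprocal factors, conjugation by $N$ swaps them, and scalar multiplication scales both; since $K$ is quadratically closed, composing these operations produces every $xE_{1,2}+yE_{2,1}$ with $xy\neq 0$. Thus $Im(f)$ contains the dense open set of non-nilpotent elements of $A_1$, hence is dense in $A_1$, hence equals $A_1=M_2(K)_1$ or a proper dense subset $D_1$.

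Now suppose $g=\overline 0$, so $Im(f)\subseteq A_0$, the diagonal matrices, on which the only homogeneous conjugation acting nontrivially is the swap $\textrm{diag}(a,b)\mapsto\textrm{diag}(b,a)$ induced by $N$. Evaluating $f$ on the generic matrices of the appropriate degrees gives an element $\textrm{diag}(p,q)$ of the degree-zero component of $Gen^{\Z_2}(A)$, with $p,q\in K[Y]$, and every admissible evaluation of $f$ on $M_2(K)$ is a specialization of this one. If $\det f=pq$ vanishes identically then $pq=0$ in $K[Y]$, so $p=0$ or $q=0$; in either case $f$ evaluated generically is a multiple of $E_{1,1}$ or of $E_{2,2}$, so $Im(f)$ lies on one coordinate line, and the $N$-invariance of $Im(f)$ then forces $p=q=0$, contradicting that $f$ is not an identity (here one uses that $Gen^{\Z_2}(A)$, and hence $K[Y]$, is a domain, cf. Proposition \ref{domain}). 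Hence $pq\neq 0$, $f$ is generically non-singular, and on a dense open set of parameters we may form the rational function $\pi=\Pi\circ f=-2+(p+q)^2/(pq)$, with $\Pi$ the ratio-of-eigenvalues map introduced above.

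If $\pi$ is constant, say $\pi\equiv c$, then $(p+q)^2=(c+2)pq$, i.e. $(p/q)^2-c(p/q)+1=0$ in $K(Y)$; since $K$ is quadratically closed we get $p=\mu q$ for some $\mu\in K$ with $\mu+\mu^{-1}=c$. Then $Im(f)\subseteq K\cdot\textrm{diag}(\mu,1)$, and $N$-invariance of this line forces $\mu^2=1$; combined with $0\in Im(f)$ and scalar invariance, $Im(f)$ equals that whole line, namely $K$ if $\mu=1$ and $sl_2(K)_0$ if $\mu=-1$. If instead $\pi$ is not constant, then as a nonconstant regular function on an irreducible parameter variety it attains infinitely many values, so it attains some $c\notin\{2,-2,0\}$; at such a point $f$ takes a value $M_0=\textrm{diag}(a,b)$ that is non-singular, non-scalar and non-traceless. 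The orbit of $M_0$ under scalar multiplications and the swap is exactly $\{\textrm{diag}(x,y):x^2-cxy+y^2=0\}$, a union of two distinct lines through the origin, neither a coordinate axis. Running this over all attained $c\notin\{2,-2,0\}$ shows that $Im(f)$ contains a union of infinitely many distinct lines through the origin; a nonzero polynomial cannot vanish on such a set, so $Im(f)$ is dense in $A_0$ — equivalently, this is Lemma \ref{reverse} applied to the open dominant map $\Pi|_{A_0}$ and the dense set of attained ratios. Therefore $Im(f)$ is $M_2(K)_0$ or a proper dense subset $D_0$, and collecting all cases yields the list in the statement.

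The step I expect to be the main obstacle is the density conclusion in this last subcase: one must verify that the orbit of a single non-degenerate evaluation under the rather small group generated by scalar multiplications and the one available swap already exhausts a full $\Pi$-fiber, and then that the union of these fibers over a Zariski-dense set of ratios is dense in $A_0$ — i.e.\ that $\Pi|_{A_0}$ is open enough for Lemma \ref{reverse} to apply. By contrast, eliminating the degenerate case $\det f\equiv 0$ via the graded-domain property and the analysis of constant $\pi$ are routine, as is the whole case $g=\overline 1$ once Lemma \ref{nonsingular} is invoked.
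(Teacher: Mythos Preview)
Your argument is correct, but the route differs from the paper's in both halves. For $g=\overline 1$ the paper does a case analysis on whether two evaluations with different eigenvalue ratios occur (in which case conjugation and scaling give all of $A_1$) or the ratio is fixed at $-1$ (giving exactly the non-singular matrices together with $0$); you bypass this split by invoking Lemma~\ref{nonsingular} once and observing that the orbit of any non-singular element under scaling, diagonal conjugation and the swap $N$ already exhausts the non-singular part of $A_1$. For $g=\overline 0$ the paper passes to the expediting algebra (generic matrices with traces adjoined, Proposition~\ref{proposition1}), uses Cayley--Hamilton in the form $(f-\lambda_1 I)(f-\lambda_2 I)=0$ together with the graded-domain property to force a fixed ratio to be $\pm 1$, and in the non-fixed case interpolates along a one-parameter path $p(t)$ and studies the rational function $\Pi\circ p:K\to K$. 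You instead stay in the ordinary graded generic matrix algebra: writing the generic value as $\mathrm{diag}(p,q)$ with $p,q\in K[Y]$, the constant-$\pi$ case becomes $p=\mu q$ with $\mu^2=1$ via a quadratic over $K(Y)$, and the non-constant case follows from the elementary fact that a subset of $K^2$ containing infinitely many lines through the origin is Zariski dense. Your version avoids the trace ring and the Posner-type localization machinery of Theorems~\ref{pregradedposner}--\ref{gradedposner} entirely, at the price of being tailored to the $2$-dimensional diagonal component; the paper's argument tracks the ungraded template of \cite{K-BMR} more closely and would adapt more readily to larger $n$.

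One minor slip: your opening claim that a semi-homogeneous polynomial is multihomogeneous is false (e.g.\ $x_1^2+x_2^2$ with unit weights). What is actually used --- and what the paper also assumes without comment in its proof --- is only that $f$ is $\mathbb Z_2$-homogeneous, so that $Im(f)$ lies in a single component $A_g$; treat this as part of the hypothesis rather than as a consequence of semi-homogeneity.
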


\begin{proof}
    Let $f=f(x_1,\ldots,x_m)$ be a semi-homogeneous polynomial of homogeneous degree 0. 
    
    Assume first that every element of the image of $f$ has a fixed ratio $r$ of its eigenvalues. Suppose further $f$ is not an identity and $r\neq\pm 1$. Then the eigenvalues $\lambda_1$, $\lambda_2$ of a certain matrix $f(a_1,\ldots,a_m)$ are linear functions of $tr(f(a_1,\ldots,a_m))$ so they belong to the expediting $\Z_2$-graded algebra that is a graded domain because of Proposition \ref{proposition1}. Notice now $f-\lambda_1I$ and $f-\lambda_2I$ are non-zero homogeneous elements of degree 0 and their product is 0 because of Cayley-Hamilton's Theorem that is an absurd. This forces $r$ being $1$ or $-1$.
    
    Suppose $r=1$, then $f$ is a central polynomial and $Im(f)=K$. Assume $r=-1$, then $Im(f)$ turns out to be $sl_2(K)_0$.

    Now assume there are homogeneous matrices $a_1,\ldots,a_m$, $b_1,\ldots,b_m$ so that $f(a_1,\ldots,a_m)$ and $f(b_1,\ldots,b_m)$ have different ratios of eigenvalues. We consider the map $p:K\longrightarrow M_2(K)_0$ given by \[p(t):=f(ta_1+(1-t)b_1,\ldots,ta_m+(1-t)b_m)\] 
    and set $P=\{p(t)|t\in K\}$. Notice $p$ is a polynomial function whose image $P$ is contained in the image of $f$. Let us denote by $\tilde{h}$ the function $\Pi\circ p:K\rightarrow K$. 
    
    If we endow $K$ and $M_2(K)_0$ with their Zariski's topology, then $\Pi$ and $p$ turn out to be continuous and open. Hence $\tilde{h}$ is continuous and open as well. Recall that $\tilde{h}:K \longrightarrow K$ is a rational function since by equation (\ref{eigen}),
    $\tilde{h} =tr^2(p)/det(p)-2$.
    
    Now write $\tilde h$ as $A(t)/B(t)$, where $A(t)$ and $B(t)$ are polynomial functions of degree less than or equal to $2\deg(f)$ and cannot be written as ratio of polynomials of smaller degrees. Now we are going to study $\tilde{h}(K)$. 
    
    Observe that $c\in K$ belongs to $\tilde{h}(K)$ if and only if there exists $t\in K$ such that $A(t)-cB(t)=0$. Of course, only a finite number of elements of $K$ does not belong to $\tilde{h}(K)$, since the common roots of $A(t)$ and $B(t)$ is a finite set. This means $\tilde{h}(K)$ is open and then dense in the Zariski's topology of $K$. Since $\Pi$ is open and continuous, $\Pi^{-1}(\tilde{h}(K)) = P$ is dense in $M_2(K)_0$ by Lemma \ref{reverse}. Finally, because $P\subseteq Im(f)$, we get $Im(f)$ is dense in $M_2(K)_0$.
    
    We study now the case in which $f=f(x_1,\ldots,x_m)$ is a semi-homogeneous polynomial of homogebeous degree 1. Observe in this case the only possible ratios are $0$ and $-1$. Assume there are homogeneous matrices having different ratios of eigenvalues.  We have the following cases: 
    \begin{enumerate}
        \item $\left(\begin{array}{cc}
           0  &  r\\
            s & 0
        \end{array}\\
        \right)$,\ \ $\left(\begin{array}{cc}
           0  &  a\\
            0 & 0
        \end{array}\\
        \right)$ belong to $Im(f)$, where $r,s,a\neq0$;
        \item $\left(\begin{array}{cc}
           0  &  r\\
            s & 0
        \end{array}\\
        \right)$,\ \ $\left(\begin{array}{cc}
           0  &  0\\
            b & 0
        \end{array}\\
        \right)$ belong to $Im(f)$, where $r,s,b\neq0$.
    \end{enumerate}
In both cases, by Lemma \ref{cone}, we may obtain any element of $M_2(K)_1$ with a suitable conjugation by $E_{12}+E_{21}$ or by some diagonal matrix and scalar multiplication (recall that since $K$ is closed under $d$-roots, the image of $f$ is closed under scalar multiplication). This implies $Im(f)=M_2(K)_1$.  

Assume now every element in the image of $f$ has fixed ratio $r=-1$. Then by Lemma \ref{cone}, and by the fact that $Im(f)$ is closed under scalar multiplication, applying suitable conjugations by diagonal matrices, we obtain that
\[Im(f) = \left\{\left(\begin{array}{cc}
    0 & 0 \\
    0 & 0
\end{array}\right)\right\} \cup \left\{\left(\begin{array}{cc}
   0  & r \\
    s & 0
\end{array}\right), r,s\neq0\right\}.\]
If we set $S=\{xy\}\subset K[x,y]$ one can easily see that $Im(f)=V(S)^c\cup \left\{\left(\begin{array}{cc}
   0  & 0 \\
   0  & 0
\end{array}\right)\right\}$. Now observe $V(S)^c$ being the complementary set of a closed set is open, then it is dense in the Zariski's topology of $M_2(K)_1$. Because $Im(f)$ contains a dense set, it is dense as well.
\end{proof}

\section{Funding}

This work was supported by São Paulo Research Foundation (FAPESP), grant 2018/23690-6.

\bibliographystyle{abbrv}
\bibliography{ref}

\begin{thebibliography}{10}

\bibitem{AM}
A.~A. Albert and B.~Muckenhoupt.
\newblock On matrices of trace zeros.
\newblock {\em Michigan Math. J.}, 4:1--3, 1957.

\bibitem{Azevedo}
S.~S. Azevedo.
\newblock Graded identities for the matrix algebra of order {$n$} over an
  infinite field.
\newblock {\em Comm. Algebra}, 30(12):5849--5860, 2002.

\bibitem{balaba}
I.~N. Balaba.
\newblock Graded prime {PI}-algebras.
\newblock {\em Fundam. Prikl. Mat.}, 9(1):19--26, 2003.

\bibitem{Brandao}
A.~Brand\~{a}o, Jr.
\newblock Graded central polynomials for the algebra {$M_n(K)$}.
\newblock {\em Rend. Circ. Mat. Palermo (2)}, 57(2):265--278, 2008.

\bibitem{div1}
O.~M. Di~Vincenzo.
\newblock On the graded identities of {$M_{1,1}(E)$}.
\newblock {\em Israel J. Math.}, 80(3):323--335, 1992.

\bibitem{Dykema_Klep}
K.~J. Dykema and I.~Klep.
\newblock Instances of the {K}aplansky-{L}vov multilinear conjecture for
  polynomials of degree three.
\newblock {\em Linear Algebra Appl.}, 508:272--288, 2016.

\bibitem{PlamenPedro}
P.~Fagundes and P.~Koshlukov.
\newblock Images of multilinear graded polynomials on upper triangular matrix
  algebras.
\newblock {\em arXiv:2205.10698v1}, 2022.

\bibitem{Fagundes}
P.~S. Fagundes.
\newblock The images of multilinear polynomials on strictly upper triangular
  matrices.
\newblock {\em Linear Algebra Appl.}, 563:287--301, 2019.

\bibitem{GargatedeMello}
I.~G. Gargate and T.~C. de~Mello.
\newblock Images of multilinear polynomials on n × n upper triangular matrices
  over infinite fields.
\newblock {\em Israel J. Math.}, to appear, 2022.

\bibitem{K-BMR}
A.~Kanel-Belov, S.~Malev, and L.~Rowen.
\newblock The images of non-commutative polynomials evaluated on {$2\times2$}
  matrices.
\newblock {\em Proc. Amer. Math. Soc.}, 140(2):465--478, 2012.

\bibitem{K-BMR3}
A.~Kanel-Belov, S.~Malev, and L.~Rowen.
\newblock The images of multilinear polynomials evaluated on {$3\times 3$}
  matrices.
\newblock {\em Proc. Amer. Math. Soc.}, 144(1):7--19, 2016.

\bibitem{survey}
A.~Kanel-Belov, S.~Malev, L.~Rowen, and R.~Yavich.
\newblock Evaluations of noncommutative polynomials on algebras: methods and
  problems, and the {L}'vov-{K}aplansky conjecture.
\newblock {\em SIGMA Symmetry Integrability Geom. Methods Appl.}, 16:Paper No.
  071, 61, 2020.

\bibitem{rowenkanelbelov}
A.~Kanel-Belov and L.~H. Rowen.
\newblock {\em Computational aspects of polynomial identities}, volume~9 of
  {\em Research Notes in Mathematics}.
\newblock A K Peters, Ltd., Wellesley, MA, 2005.

\bibitem{Kemer}
A.~R. Kemer.
\newblock {\em Ideals of identities of associative algebras}, volume~87 of {\em
  Translations of Mathematical Monographs}.
\newblock American Mathematical Society, Providence, RI, 1991.
\newblock Translated from the Russian by C. W. Kohls.

\bibitem{Kra}
I.~Kra and S.~R. Simanca.
\newblock On circulant matrices.
\newblock {\em Notices Amer. Math. Soc.}, 59(3):368--377, 2012.

\bibitem{Kulyamin}
V.~V. Kulyamin.
\newblock Images of graded polynomials in matrix rings over finite group
  algebras.
\newblock {\em Uspekhi Mat. Nauk}, 55(2(332)):141--142, 2000.

\bibitem{Wang_nxn}
Y.~Luo and Y.~Wang.
\newblock On {F}agundes-{M}ello conjecture.
\newblock {\em J. Algebra}, 592:118--152, 2022.

\bibitem{MalevM}
S.~Malev.
\newblock The images of non-commutative polynomials evaluated on {$2\times 2$}
  matrices over an arbitrary field.
\newblock {\em J. Algebra Appl.}, 13(6):1450004, 12, 2014.

\bibitem{MalevQ}
S.~Malev.
\newblock The images of noncommutative polynomials evaluated on the quaternion
  algebra.
\newblock {\em J. Algebra Appl.}, 20(5):Paper No. 2150074, 8, 2021.

\bibitem{MalevJ}
S.~Malev, R.~Yavich, and R.~Shayer.
\newblock Evaluations of multilinear polynomials on low rank {J}ordan algebras.
\newblock {\em Comm. Algebra}, 50(7):2840--2845, 2022.

\bibitem{SantuloYukihide}
E.~Santulo and F.~Yukihide~Yasumura.
\newblock On the image of polynomials evaluated on incidence algebras: a
  counter-example and a solution.
\newblock {\em arXiv:1902.08116v1}, 2019.

\bibitem{Shoda}
K.~Shoda.
\newblock Einige {S}\"{a}tze \"{u}ber {M}atrizen.
\newblock {\em Jpn. J. Math.}, 13(3):361--365, 1937.

\bibitem{Vasilovsky}
S.~Y. Vasilovsky.
\newblock {$Z_n$}-graded polynomial identities of the full matrix algebra of
  order {$n$}.
\newblock {\em Proc. Amer. Math. Soc.}, 127(12):3517--3524, 1999.

\end{thebibliography}

\end{document}